\documentclass[11pt]{article}
\usepackage{geometry} 
\usepackage{amsmath,amsthm,amssymb}                
\usepackage{graphicx, color}
\usepackage{amssymb}
\usepackage{epstopdf}
\usepackage{pdfsync}
\usepackage{mathabx}

\newtheorem{definition}{Definition}[section]
\newtheorem{lemma}[definition]{Lemma}
\newtheorem{theorem}[definition]{Theorem}
\newtheorem{proposition}[definition]{Proposition}
\newtheorem{corollary}[definition]{Corollary}
\newtheorem{remark}[definition]{Remark}

\RequirePackage{authblk}

\numberwithin{equation}{section}

\def\e{\varepsilon}

\def\dx{\,dx}

\title{Asymptotic analysis of higher-order perturbations\\
of the Perona--Malik functional}
\author[1]{Andrea Braides} 
\author[2]{Irene Fonseca}
\affil[1]{\small University of Rome Tor Vergata, Rome, Italy, {\tt braides@mat.unroma2.it}}
\affil[2]{\small Carnegie Mellon University, Pittsburgh, USA, {\tt fonseca@andrew.cmu.edu}}
\date{}
\begin{document} 

\maketitle

\begin{abstract}
The $\Gamma$-limit of higher-order singular perturbations of the Perona–-Malik functional is analyzed. The energies considered combine the critically scaled logarithmic term with a $k$-th order regularization designed to balance bulk and interfacial effects. A compactness result is obtained, and the $\Gamma$-limit is identified as a free-discontinuity functional on SBV, given by the sum of the Dirichlet energy and a surface term proportional to the jump amplitude to the power 
$1/k$. The surface density is characterized through a one-dimensional optimal-profile problem with homogeneous boundary conditions on derivatives up to order $k-1$. As a consequence, the limit of the same energies at a different scaling is determined. That scaling had been previously studied in the second-order case to address the so-called staircasing phenomenon.

\smallskip
{\bf MSC Codes:} 49J45, 49J10, 49Q20, 26A45, 94A08

\smallskip
{\bf Keywords:} Perona--Malik functional; singular perturbations; higher-order regularization; free-discontinuity problems; SBV functions; Mumford--Shah type energies

\end{abstract}
\section{Introduction}
The Perona--Malik equation in Image Processing is a formally ill-posed equation, which can nevertheless be discretized or smoothed to obtain stable numerical schemes \cite{56205,perona1994anisotropic,MR1470926}. From the standpoint of the Calculus of Variations, such an equation is related to the functional
$$
\int_\Omega \log(1+|\nabla u|^2)dx,
$$
which is a non-convex functional whose minimizers tend to develop singularities. In terms of the direct methods of the Calculus of Variations, this is highlighted by the fact that its lower-semicontinuous envelope is the functional identically $0$. 
Nevertheless, its form suggests an interesting behavior of minimizing sequences for minimum problems involving this energy. Indeed, the functional is very close to the Dirichlet integral for small values of the gradient, which suggests a smoothening effect, while it is non-convex for high values of the gradient, which suggests the formation of interfaces. This analysis can be formalized in different ways. One way is by discretization using finite differences, introducing a cubic mesh and approximating the integral with a finite-difference sum. By properly scaling the finite differences and the mesh size, one can transform this analysis in the computation of a $\Gamma$-limit in a discrete-to-continuum setting allowing for surface energies (see \cite{MR4670293} for a general introduction). In particular, a result by Morini and Negri \cite{M-N} shows that the suitably scaled discretizations of the Perona--Malik energy are approximated by (an anisotropic version of) the Mumford--Shah functional \cite{MR997568}, within the theory of free-discontinuity functionals introduced by De Giorgi \cite{BLN}.

A different approach to the analysis of the Perona--Malik energy is by adding a singular perturbation.
This has been shown to give rise to free-discontinuity problems with a surface energy depending on the jump size. The effects of this surface contribution can be analyzed to explain {\em staircasing} phenomena \cite{GP-2,MR4902836} (see also \cite{MR4749793} for the analysis of the discretized version). 
In this work, we consider a perturbation of the Perona--Malik functional of the form
\begin{equation}\label{PMe}
F_\e(u)\coloneq\int_\Omega \frac{1}{\e|\log\e|} \log(1+\e|\log\e||\nabla u|^2)\dx+ \e^{2k-1} \int_\Omega \|\nabla^{(k)}u\|^2\dx,
\end{equation}
where $\|\cdot\|$ denotes the operator norm, defined on $H^k(\Omega)$. The scalings are designed so that in the limit both bulk and surface energies appear. Hence, this scaling gives a complete picture of the behaviour of $F_\e$.
Heuristically, the first integral  in \eqref{PMe} pointwise converges to the Dirichlet integral  as $\e\to 0$; however, thanks to the sub-linear growth of the logarithm, it may be finite (and even infinitesimal) on sequences $\{u_\e\}$ converging to a function with jump discontinuities or non-trivial Cantor part of the derivative. The addition of the singular perturbation prevents the concentration on Cantor derivatives, and gives a finite energy on jump discontinuities. In the language of $\Gamma$-convergence, we will show that the $\Gamma$-limit of $F_\e$ in the $L^1$-topology is given by
\begin{equation}\label{limform}
F(u)\coloneq\int_\Omega |\nabla u|^2\dx+ m_k\int_{S(u)\cap\Omega}\root k\of{|u^+-u^-|}d\mathcal H^{N-1},
\end{equation}
on special functions of bounded variation. The appearance of a $k$-th power has been conjectured by Gobbino and Picenni \cite{GP-2}, and can be seen as a consequence of the scaling properties of the singular perturbation with the $k$-th derivative. 
The constant $m_k$ in \eqref{limform} is given by
\begin{eqnarray}\label{emmeka}\nonumber
&&m_k\coloneq\inf_{T>0}\min\bigg\{T+\int_0^T|v^{(k)}|^2\,dt: v\in H^k(0,T),  v(0)=0, v(T)=1, \\ &&\hskip4cmv^{(\ell)}(T)=v^{(\ell)}(0)=0 \hbox{ for all } \ell\in\{1,\ldots,k-1\}\Big\}.
\end{eqnarray}
In this formula, we optimize the sum of a length and a transition energy on an interval of that length given by the $k$-th derivative with homogeneous conditions on the other derivatives. 

In order to explain the nature of the constant $m_k$, we refer to a recent paper by Solci \cite{S-2}, in which the author studies functionals of the form
\begin{equation}\label{ABGe}
F^S_\e(u)\coloneq\int_\Omega \min\Big\{|\nabla u|^2,\frac1\e\Big\}\dx+ \e^{2k-1} \int_\Omega \|\nabla^{(k)}u\|^2\dx.
\end{equation}
In comparison to that case, the functionals $F_\e$ in 
\eqref{PMe}, instead, are shown to satisfy a lower bound, of the form
\begin{equation}\label{weaklb}
F_\e(u)\ge\int_\Omega \min\Big\{\e^{1-2p_\e}|\nabla u|^2,\frac1\e\Big\}\dx+ \e^{2k-1} \int_\Omega \|\nabla^{(k)}u\|^2\dx,
\end{equation}
for some $p_\e$ slowly tending to $0$. This is a much weaker estimate than that for $F^S_\e$.
The main novelty of the paper is in proving that suitable adaptations of the local interpolation methods of \cite{S-2} allow to work in the low-coerciveness framework \eqref{weaklb}. As a consequence, we can estimate the energy necessary to create a jump discontinuity by minimizing on functions with gradient `above the threshold'; that is, where $\e^{1-2p_\e}|\nabla u|^2>1/\e$. Reducing to one-dimensional profiles, we then prove that $m_k$ has the form \eqref{emmeka}. We note that this form is the same as for the limit of the functionals $F_\e^S$, and hence independent of the logarithmic growth. The additional homogeneous boundary conditions on derivatives of order $\ell\ge2$ are obtained by using local interpolation techniques. We note that if $k=2$ and we consider functionals $F^S_\e$, then the boundary conditions in \eqref{emmeka}, which are only on the first derivative,  are directly obtained from the threshold condition. This case had previously been treated in \cite{abg} and \cite{BDSepp}.
Once higher-order effects are taken into account by the jump part of the limit energy, the bulk part can be handled as for the discrete approximations of the Mumford--Shah functionals in \cite{M-N}. 

From the analysis of functionals \eqref{PMe}, we also deduce the limit of functionals
$$
\mathbb F_\e(u)\coloneq\int_\Omega \frac{1}{2\e|\log\e|} \log(1+|\nabla u|^2)\dx+ \e^{2k-1} \int_\Omega \|\nabla^{(k)}u\|^2\dx,
$$
with a different scaling of the Perona--Malik functional, analyzed by Gobbino and Picenni \cite{GP-2} when $k=2$ (see also previous work by Bellettini and Fusco \cite{MR2403710}). In this case, we show that the limit is
$$
\mathbb F(u)\coloneq m_k\int_{S(u)\cap\Omega}\root k\of{|u^+-u^-|}d\mathcal H^{N-1}
$$
with domain functions in $SBV(\Omega)$ with $\nabla u=0$ almost everywhere.

\bigskip
The structure of the paper is as follows.  After stating the main result and remarking that we can reduce to studying the one dimensional case, in Section \ref{se-pre} we gather the relevant results preliminary to the identification of the $\Gamma$-limit. After proving that the weak lower bound \eqref{weaklb} holds, we proceed and prove a number of lemmas that allow us to treat functions whose gradient exceeds the threshold $|\nabla u|=\e^{p_\e}/\e$ in \eqref{weaklb}. Loosely speaking, every interval `above the threshold' corresponds to a limit jump, and minimizing on functions corresponding to the same jump gives the limit jump energy density. The main issue here, when $k>2$, is the determination of optimal boundary conditions, which are proved to be homogeneous on all derivatives. This is achieved by proving that if the jump is sufficiently large, then intervals above the threshold can be slightly extended in such a way that all derivatives are small  at the boundary.  
In Section \ref{ga-sec} we then study the $\Gamma$-limit, on the one hand, using the preliminary results to characterize the limit jump energy, and on the other hand using the properties of the scaling of the logarithmic part in \eqref{PMe} to prove that the bulk part of the limit is a Dirichlet integral. Finally, in Section \ref{sufi} we address the functionals $\mathbb F_\e$.

\section{Statement of the convergence results}
We recall that $SBV(\Omega)$ is the subset of the function $BV(\Omega)$ whose distributional derivative $Du$ can be written as $Du= \nabla u\, \mathcal L^d+ (u^+-u^-)\nu\mathcal H^{d-1}\lfloor S(u)$, where $\nabla u$ is the approximate differential of $u$, $S(u)$ is the set of discontinuity points of $u$, $\nu$ is the measure-theoretical normal to $S(u)$ and $u^\pm$ are the approximate values of $u$ on both sides of $S(u)$. The space $GSBV(\Omega)$, furthermore, is the set of all measurable functions $u$ whose truncations $(u\vee T)\wedge(-T)$ are in $SBV(\Omega)$ for all $T>0$. We refer to \cite{MR1857292,BLN} for an introduction to these spaces, and a rigorous definition of these objects. 

In our context, we will mainly deal with the one-dimensional case, in which a function $u$ belongs to $SBV(0,1)$ if $u=v+w$, where $v\in W^{1,1}(0,1)$ is absolutely continuous and $w=\sum_{i\in I} z_i\chi_{(t_i,1)}$, where $I$ is a finite or countable set of indices and $z_i\neq 0$ are such that $\sum_i|z_i|<+\infty$, is a function with only jump discontinuities, so that it is not unreasonable to think of $SBV$ functions as piecewise $W^{1,1}$ functions.  Moreover, since in our context the size of the discontinuities is bounded on functions with finite $\Gamma$-limit, the domain of the latter is indeed contained in $SBV(0,1)$ and not only in $GSBV(0,1)$ in the one-dimensional case.

\bigskip
The main result of the paper is the following theorem.

\begin{theorem}\label{main}Let $\Omega$ be a bounded open set in $\mathbb R^d$ and let
\begin{equation}
F_\e(u)\coloneq\int_\Omega \frac{1}{\e|\log\e|} \log(1+\e|\log\e||\nabla u|^2)\dx+ \e^{2k-1} \int_\Omega \|\nabla^{(k)}u\|^2\dx,
\end{equation}
be defined on $H^k(\Omega)$,
where $\|\cdot\|$ denotes the operator norm of a $k$-th order tensor. Then for every sequence $u_\e$ such that $F_\e(u_\e)\le S<+\infty$ there exist a subsequence and constants $C_\e$ such that $u_\e+C_\e$ converge in measure to a function $u\in GSBV(\Omega)$. Furthermore, the $\Gamma$-limit of $F_\e$ in the $L^1$-topology and in measure is given by
\begin{equation}
F(u)\coloneq\int_\Omega |\nabla u|^2\dx+ m_k\int_{S(u)\cap\Omega}\root k\of{|u^+-u^-|}d\mathcal H^{N-1},
\end{equation}
with the constant $m_k$ given by
\begin{eqnarray}\label{emmeka-2}\nonumber
&& m_k\coloneq\inf_{T>0}\min\bigg\{T+\int_0^T|v^{(k)}|^2\,dt: v\in H^k(0,T),  v(0)=0, v(T)=1, \\ &&\hskip4cm v^{(\ell)}(T)=v^{(\ell)}(0)=0 \hbox{ for all } \ell\in\{1,\ldots,k-1\}\Big\}.
\end{eqnarray}
\end{theorem}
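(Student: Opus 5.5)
The plan is to reduce to dimension one by slicing, prove compactness together with the lower bound on one-dimensional sections, and then build recovery sequences from one-dimensional optimal profiles.

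\emph{Reduction to one dimension.} For the lower bound we use the slicing method. For a direction $\xi\in S^{d-1}$, the slices $u^{\xi,y}(t)=u(y+t\xi)$ satisfy two pointwise inequalities: $|\nabla u|\ge|\langle\nabla u,\xi\rangle|$, and since $\|\cdot\|$ is the operator norm, $\|\nabla^{(k)}u\|\ge |(u^{\xi,y})^{(k)}|$. Hence $F_\e(u)\ge\int_{\Pi_\xi}F^{1}_\e(u^{\xi,y};\Omega^{\xi,y})\,d\mathcal H^{d-1}(y)$. Once the one-dimensional liminf inequality is known for the energy $\int|u'|^2+m_k\sum\root k\of{|u^+-u^-|}$, the standard localization and supremum-of-measures argument recovers $F$. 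Compactness in measure, up to additive constants $C_\e$, follows from the one-dimensional compactness via the $GSBV$ slicing criterion. The criterion applies because the jump energy $\root k\of{|z|}$ is superlinear near $0$ and the bulk density is quadratic, so the Ambrosio compactness theorem for $GSBV$ applies after truncation.

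\emph{One-dimensional lower bound.} We start from the weak lower bound \eqref{weaklb}. Let $u_\e\to u$ with bounded energy. Split $(0,1)$ into the set $A_\e=\{|u_\e'|>\e^{p_\e-1}\}$ (above the threshold) and its complement. The measure of $A_\e$ is at most $S\e$. The $\e^{2k-1}$ term together with an interpolation (Gagliardo--Nirenberg type) inequality shows that $A_\e$ consists of boundedly many intervals of length of order $\e$, apart from intervals carrying negligible total jump. After rescaling $t=\e s$ and $v(s)=u_\e(\e s)$, each such interval $I$ contributes at least $|I|/\e+\int_{I/\e}|v^{(k)}|^2$. The increment $z$ of $u_\e$ across $I$ then gives, by scaling $v\mapsto v/z$ and $s\mapsto s\,|z|^{-1/k}$, energy at least $m_k\root k\of{|z|}$. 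This holds provided the boundary conditions $v^{(\ell)}=0$, $\ell=1,\dots,k-1$, hold at the endpoints.

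The main obstacle is obtaining these homogeneous conditions for $k>2$. At the threshold only $v'$ is small, of order $\e^{p_\e}$ in rescaled variables, and $p_\e\to0$ slowly. The plan is to extend each interval of $A_\e$ by a small multiple $\eta$ of its rescaled length. On the extension, the $L^2$ smallness of $v^{(k)}$ and the smallness of $v'$ are combined through the local interpolation of \cite{S-2} to select endpoints where all derivatives of order $1,\dots,k-1$ are $o(1)$. We then modify $v$ by a polynomial cutoff at cost $o(1)+O(\eta)$. The gain in length is $O(\eta)$, and we let $\eta\to0$. We must check that the slow decay of $\e^{p_\e}$ still makes the intermediate derivatives vanish in the limit; this is where the logarithmic scaling enters.

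On the complement of $A_\e$, the set where $\e|\log\e||u_\e'|^2\le \e^{2p_\e-1}|\log\e|$ could be large, so a second threshold is needed. Choose $\delta_\e\to0$ with $\e|\log\e||u_\e'|^2\le\delta_\e$ outside a set $B_\e$. Off $B_\e$ we have $\frac{1}{\e|\log\e|}\log(1+\e|\log\e||u'|^2)\ge(1-o(1))|u'|^2$, which gives $\int|u'|^2$ by lower semicontinuity, following the argument of \cite{M-N}. On $B_\e\setminus A_\e$ the energy is at least of order $\log(\delta_\e)/(\e|\log\e|)$ per unit length, so $|B_\e|=O(\e)$ as well. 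We show that the increments across $B_\e\setminus A_\e$ vanish: there $|u'|\le \e^{p_\e-1}$ and the length is $O(\e)$, giving increment $O(\e^{p_\e})\to0$.

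\emph{Upper bound.} By density it suffices to treat $u$ that is smooth off a finite union of hyperplane pieces, with smooth jump $z$ along $S(u)$. Near $S(u)$ we glue the rescaled optimal profile $z\,v_T\!\left(\frac{\nu\cdot x}{\e|z|^{1/k}}\right)$, where $v_T$ is almost optimal for $m_k$. Its gradient is of order $1/\e$, so the logarithmic term costs $\frac{\log(1/\e)}{\e|\log\e|}(1+o(1))$ per unit length, that is, $T|z|^{1/k}$. The boundary conditions make the gluing $H^k$-smooth. Away from $S(u)$ the energy is at most $\int|\nabla u|^2$ because $\log(1+x)\le x$, and the higher-order term is $O(\e^{2k-1})$. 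A variable jump along $S(u)$ is handled by localization, and the general case follows by density of such functions in energy together with lower semicontinuity of the $\Gamma$-limsup.
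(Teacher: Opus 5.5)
Your architecture matches the paper's: slicing, the truncated-quadratic bound \eqref{weaklb}, a second threshold for the bulk term as in \cite{M-N}, enlarging transition intervals until all derivatives up to order $k-1$ are small, and rescaled optimal profiles with density for the upper bound. However, the one-dimensional lower bound, and with it compactness in $SBV$, rests on two assertions you do not prove, and they are the technical core.

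\textbf{The extension step.} It works for an \emph{isolated} interval of $A_\e$ whose rescaled length $L$ is bounded below. There a window $J$ of length $\eta L$ lies entirely below threshold, so $|v'|\le c_\e$ pointwise on $J$, and \eqref{interp1} makes $\|v^{(\ell)}\|_{L^2(J)}$ small. In general this fails in two ways.
\begin{itemize}
\item A jump may be produced by a cluster of above-threshold intervals separated by short gaps. In these gaps $|v'|<c_\e$ but $v'',\dots,v^{(k-1)}$ are of order one. Your window then runs into the next interval of $A_\e$, where $v'$ is not small, and no good endpoint is found.
\item If $L$ is small, the term $\|v'\|^2_{L^2(J)}/|J|^{2(\ell-1)}$ in \eqref{interp1} does not vanish. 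For $k\ge4$ a very short interval above threshold can even carry a fixed increment with $v^{(k)}\equiv0$ on it: take $v'$ quadratic, equal to $c_\e$ at both ends. Its whole cost then lies in the neighbouring regions.
\end{itemize}
You have to merge such clusters up to points with small derivatives, and show that the merged intervals still pay almost their full length. The paper does this with Lemma \ref{lowerlemma}, Lemma \ref{osck-lemma} and Proposition \ref{stimatausigma1}. Lemma \ref{lowerlemma} says gaps without such points have length $O(\e^{1+p_\e/(k-1)})$. This is exactly where your pending ``check'' on the slow decay of $\e^{p_\e}$ is settled, since $\e^{p_\e/(k-1)}\to0$ by Lemma \ref{MN-lemma}(a).

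\textbf{Small jumps.} You claim that $A_\e$ consists of boundedly many intervals of length of order $\e$ ``apart from intervals carrying negligible total jump''. This does not follow from a Gagliardo--Nirenberg inequality. It can only hold in the sense that the total increment carried by pieces with increment below $\theta$ tends to $0$ as $\theta\to0$, uniformly in $\e$. If $u$ has infinitely many jumps, it fails for any fixed cutoff.

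Proving it requires a superlinear cost for small increments: an increment $|z|<\theta$ must cost at least $C_\theta|z|$ with $C_\theta\to\infty$. This bounds the increment carried by such pieces by $O(\theta^{1-1/k})$, and it is what excludes a Cantor part and handles jumps realized by many small pieces. In the paper this comes from two ingredients:
\begin{itemize}
\item the fine analysis of Section \ref{estt-2}, namely the classes $\mathcal I^r_\e$ and $\mathcal I^*_\e(N)$, with boundary conditions only up to order $\ell(k)$ (just enough to make $m(N)>0$), and Lemmas \ref{upperlemma} and \ref{lemmar};
\item the scaling estimates for $\phi_{\e,N}$ that lead to Corollary \ref{psinn}.
\end{itemize}
Invoking Ambrosio's compactness theorem with density $\root k\of{|z|}$ presupposes exactly this bound, so it cannot replace it.

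\textbf{A smaller slip.} On $B_\e\setminus A_\e$ the energy density is at least $\log(1+\delta_\e)/(\e|\log\e|)$. Hence $|B_\e|\le C\e|\log\e|/\delta_\e$, not $O(\e)$. The increments there still vanish provided $\delta_\e\gg c_\e|\log\e|$, for example $\delta_\e=\sqrt{c_\e|\log\e|}$ as in the definition of $D_\e$. This choice is possible by Lemma \ref{MN-lemma}(a).
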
 
We note that the constant $m_k$ can be actually computed since it amounts to finding the coefficients of the polynomial of degree $2k-1$ symmetric with respect to $(T/2, 1/2)$ and subjected to the given boundary conditions, and then a one-dimensional optimization of $T$. However, formula \eqref{emmeka-2} is useful since it highlights that a jump corresponds to an optimal-profile problem, and suggests the form of the recovery sequences close to the jump points of the limit. 

\smallskip
This result will be proved in dimension one, the extension to higher dimension being briefly described in the following section.

\subsection{Reduction to dimension one}

As remarked in \cite{S-2} for the truncated quadratic potential, we can reduce to dimension one by a slicing and approximation argument.

1) if $u^{x,\nu}$ indicates the one-dimensional section of $u$ given by $u^{x,\nu}(t)=u(x+t\nu)$,
a Fubini argument ensures the coerciveness of almost all sections. Note that this is true whatever norm one takes on the tensor $\nabla^{(k)}u$;

2) the liminf inequality likewise follows from the one-dimensional result by slicing;

3) if $\mathcal H^{d-1}(S(u))<+\infty$ we can use a density argument in \cite{AG}, exploited in the case $k=2$ for the truncated quadratic potential, by functions with smooth $S(u)$. One has to use some extra care to control the   derivatives in the orthogonal direction to $S(u)$ appearing in $\nabla^{(k)}u$ for $k>2$, but this term is the same as in the analogous singular perturbation problem for double-well potentials in  \cite{BDS} and \cite{MR4955662}, and we can refer to those papers for details; 

4) finally, if $\mathcal H^{d-1}(S(u))=+\infty$ a recent result by Conti, Focardi and Iurlano \cite{MR4801506} ensures an approximation with finite measure, so that we can use point 3.

\smallskip
The slicing argument above is standard, but rather delicate. We refer to the recent PhD thesis of Picenni \cite{Picenni-thesis} for a treatment with all details.

\smallskip 

\section{Preliminaries}\label{se-pre}

We now give some preliminary results in order to provide a lower bound for $F_\e$. 
To that end, with a slicing argument in mind, we consider the one-dimensional case, for which
it suffices to treat the case
 \begin{equation}
F_\e(u)\coloneq\int_{(0,1)} \frac{1}{\e|\log\e|} (1+\e|\log\e||u'|^2)\dx+ \e^{2k-1} \int_{(0,1)} |u^{(k)}|^2\dx,
\end{equation}

\bigskip
We begin with some simple consequences of the properties of the logarithm, that can be found
e.g.~in \cite[Lemmas 3.2 and 3.3]{M-N}.

\begin{lemma}\label{MN-lemma} Let $p_\e$ be such that $p_\e\to 0$ as $\e\to 0$ and
\begin{equation}
\lim_{\e\to 0}\frac{\log|\log\e|}{p_\e|\log\e|}=0,
\end{equation} and let $c_\e\coloneq\e^{p_\e}$. Then

\smallskip
{\rm(a)} $\displaystyle\lim_{\e\to 0^+} c_\e=\lim_{\e\to 0^+}c_\e|\log\e|
=0${\rm ;}\hskip 1cm
{\rm(b)} $\displaystyle\lim_{\e\to 0^+} \frac{\log\big(1+\e|\log\e|\frac{c_\e^2}{\e^2}\big)}{|\log\e|}=1$.\end{lemma}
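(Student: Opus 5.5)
Both claims come from taking logarithms and reading off orders of magnitude in $|\log\e|$. Throughout, write $L\coloneq|\log\e|$, so $\e=e^{-L}$ and $L\to+\infty$ as $\e\to0^+$. Then $c_\e=\e^{p_\e}=e^{-p_\e L}$ and $\log c_\e=-p_\e L$.

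For (a), I first use the hypothesis to show that $p_\e L\to+\infty$. It says $\log L/(p_\e L)\to0$. Since $\log L\to+\infty$, this forces $p_\e L\to+\infty$, and moreover $p_\e L-\log L = p_\e L\,\bigl(1-\log L/(p_\e L)\bigr)\to+\infty$. Consequently $c_\e=e^{-p_\e L}\to0$, and $c_\e L=\exp(\log L-p_\e L)\to0$.

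For (b), I expand the argument of the logarithm:
$$\e L\,\frac{c_\e^2}{\e^2}=\frac{L c_\e^2}{\e}=\exp\bigl(L+\log L-2p_\e L\bigr).$$
Because $p_\e\to0$ and $\log L=o(L)$, the exponent equals $L(1+o(1))$, and in particular it tends to $+\infty$. Hence
$$\log\Big(1+\e L\,\frac{c_\e^2}{\e^2}\Big)=L+\log L-2p_\e L+\log\bigl(1+e^{-(L+\log L-2p_\e L)}\bigr).$$
The last term tends to $0$. Dividing by $L$ gives $1+\log L/L-2p_\e+o(1/L)$, which tends to $1$.

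I expect no real obstacle. The only point needing care is in (a): the hypothesis on $p_\e$ is exactly what makes $p_\e L$ dominate $\log L$, and this is what gives $c_\e L\to0$ rather than merely $c_\e\to0$. Part (b), by contrast, uses only $p_\e\to0$; the hypothesis is not needed there.
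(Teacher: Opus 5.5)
Your proof is correct. The paper gives no argument of its own for this lemma and simply defers to Morini--Negri \cite[Lemmas 3.2 and 3.3]{M-N}; your computation in the variable $L=|\log\e|$, writing $c_\e=e^{-p_\e L}$ and $\e L c_\e^2/\e^2=\exp(L+\log L-2p_\e L)$, is the elementary verification behind those results. You are also right that (b) uses only $p_\e\to0$.

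The one implicit point is in (a). The hypothesis by itself only gives $|p_\e|L\to+\infty$, so concluding $p_\e L\to+\infty$ uses $p_\e>0$. That assumption is tacit in the statement and in fact necessary: for $p_\e=-L^{-1/2}$ the hypothesis holds, but $c_\e=e^{\sqrt L}\to+\infty$.
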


\bigskip

From claim (b) in Lemma \ref {MN-lemma} we obtain the following estimate, which will be fundamental in proving the lower bound for the limit of $F_\e$. It shows that some compactness and lower bounds for $F_\e$ can be obtained by looking at energies in which the logarithmic part is substituted by a truncated quadratic potential, which is constant after the threshold ${c_\e}/\e$.

\begin{proposition}\label{sub-prop}
For all $\eta>0$ fixed we have 
$$
\frac1{\e|\log\e|}\log (1+\e|\log\e|z^2)\ge (1-\eta)\max\Big\{\frac\e{c_\e^2} z^2,\frac1\e\Big\}= (1-\eta)\max\Big\{\e^{1-2p_\e} z^2,\frac1\e\Big\}
$$
for all $z\in \mathbb R$ and $\e>0$ sufficiently small.
\end{proposition}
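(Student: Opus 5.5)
The plan is to compare the two sides separately on the two regimes $|z|\le c_\e/\e$ and $|z|\ge c_\e/\e$, where $c_\e=\e^{p_\e}$ is as in Lemma~\ref{MN-lemma}. In each regime the right-hand side should be controlled by one branch of the $\max$: the quadratic branch $\e^{1-2p_\e}z^2$ below the threshold and the constant branch $1/\e$ above it. These are exactly the two terms in \eqref{weaklb}. I flag at the outset, however, that the inequality with a literal $\max$ cannot hold as printed. At $z=0$ the left-hand side vanishes while the right-hand side equals $(1-\eta)/\e$. For $|z|\to\infty$ the left-hand side grows only logarithmically while $\e^{1-2p_\e}z^2$ grows quadratically. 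So what the argument below actually establishes is the threshold-wise bound, that is, the version with $\min$, which is the one used in \eqref{weaklb}. I do not see how to obtain the printed $\max$ version by any argument.

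\emph{Regime above the threshold.} Let $|z|\ge c_\e/\e$. By monotonicity of the logarithm,
$$\frac{1}{\e|\log\e|}\log(1+\e|\log\e|z^2)\ge \frac1\e\cdot\frac{\log\big(1+\e|\log\e|\,c_\e^2/\e^2\big)}{|\log\e|}.$$
By Lemma~\ref{MN-lemma}(b) the last fraction tends to $1$, hence it is at least $1-\eta$ for $\e$ small. This gives the bound $(1-\eta)/\e$, with the smallness of $\e$ depending only on $\eta$.

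\emph{Regime below the threshold.} Let $|z|\le c_\e/\e$ and set $s=\e|\log\e|z^2\in[0,s_\e]$, where $s_\e=|\log\e|c_\e^2/\e$. I will use the concavity of $s\mapsto\log(1+s)$, which gives $\log(1+s)\ge \frac{s}{s_\e}\log(1+s_\e)$ on $[0,s_\e]$. This yields
$$\frac{1}{\e|\log\e|}\log(1+s)\ge \frac{z^2}{s_\e}\cdot\frac{\log(1+s_\e)}{\e}= \frac{\e}{c_\e^2}\,z^2\cdot\frac{\log(1+s_\e)}{|\log\e|},$$
and Lemma~\ref{MN-lemma}(b) again makes the last factor at least $1-\eta$. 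The right-hand side here is $(1-\eta)\e^{1-2p_\e}z^2$, since $\e/c_\e^2=\e^{1-2p_\e}$.

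The only genuine analytic input is Lemma~\ref{MN-lemma}(b), applied at the single value $s_\e$. The two estimates combine to give a bound by the branch corresponding to the threshold regime of $z$, uniformly in $z$ for $\e$ small. The main obstacle is the mismatch with the printed $\max$ described above. I would therefore record the outcome as the $\min$ bound, which is what the later compactness and lower-bound arguments need.
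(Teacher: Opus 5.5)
Your argument is correct and matches what the paper intends; the paper gives no details beyond saying the estimate follows from Lemma~\ref{MN-lemma}(b). Above the threshold $c_\e/\e$ you use monotonicity, below it you use concavity of $\log(1+\cdot)$, and both reduce to (b) at the single value $s_\e=|\log\e|c_\e^2/\e$.

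You are also right that the printed $\max$ is a typo for $\min$, as the statement is false at $z=0$ and the paper itself uses $\min$ in \eqref{weaklb} and \eqref{defGI}. One cosmetic slip: the middle expression in your below-threshold display should be $\frac{z^2}{s_\e}\log(1+s_\e)$, without the extra factor $1/\e$; that is the quantity equal to your final right-hand side $\frac{\e}{c_\e^2}z^2\,\frac{\log(1+s_\e)}{|\log\e|}$.
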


\subsection{Estimates for sets with large derivatives below the threshold $c_\e/\e$}

In order to define a free-discontinuity energy, the heuristic argument is that to a maximal interval $(s_\e,t_\e)$ in which $|u'_\e|$ is above the threshold $\tfrac{c_\e}\e$ there corresponds a jump of size $z_\e=|u_\e(t)-u_\e(s)|$, and minimizing among a class of functions $v$ with the same jump, we obtain an expression for the energy density at this jump as $\e\to 0$. Unfortunately, the only other boundary conditions that we have are on the first derivative as $u'_\e(t)=u'_\e(s)\in \{-\tfrac{c_\e}\e,\tfrac{c_\e}\e\}$, which are not sufficient to determine this energy density. The idea is then to extend the interval $(s_\e,t_\e)$ to an interval $(\sigma_\e,\tau_\e)$, not much larger than $(s_\e,t_\e)$ and such that derivatives of $u_\e$ of all orders up to $k-1$ are small (up to scaling) at its boundary. To that end, we have to study intervals where $u_\e$ is  below threshold but its derivatives of all orders up to $k-1$ are not small (up to scaling).

In \cite{S-2} very fine interpolation estimates have been used for sequences of functions for which the energy
\eqref{ABGe} is equibounded. We will show that the arguments in \cite{S-2} can be adapted to the extreme case when  we have the truncated quadratic potential in Proposition \ref{sub-prop} with the coefficient of the quadratic part growing just slightly slower than $\e$.  

We use a notation parallel to that in \cite{S-2}, where the threshold is $1/\sqrt\e$. We fix a sequence  $u_\e$ and denote by $S$ an upper bound for $G_\e(u_\e)= G_\e(u_\e, (0,1))$, where
 \begin{equation}\label{defGI}
G_\e(u; I):=\int_{I} \min\big\{\e^{1-2p_\e}|u'|^2,\tfrac1\e\big\} \dx+ \e^{2k-1} \int_{I} |u^{(k)}|^2\dx
\end{equation}
for all $I\subset (0,1)$.
Note that, taking $\delta=\frac12$, by Proposition \ref{sub-prop} the upper bound holds if we have $F_\e(u_\e)\le \frac12S<+\infty$. In order to apply some results in \cite{S-2}, we assume that $2S^2>1$, which we can always do up to taking a higher upper bound. 

We remark that, in order to make a parallel between the properties obtained assuming the boundedness of \eqref{ABGe} and those obtained assuming the boundedness of \eqref{defGI}, that the first energy can be written as the second one with $p_\e=\frac12$. 

For all $\e>0$ we set 
$$
\mathcal A_\e:=\Big\{t\in (0,1): |u_\e^\prime(t)|<\frac{c_\e}{\e}\Big\}=\bigcup_{j\in \mathcal I_\e}I^\e_j.
$$
Since each $u'_\e$ is of bounded variation, upon slightly varying $c_\e$ without changing its properties, we can suppose that $\mathcal I_\e$ is a finite set, and moreover that there are no pairs of intervals $I^\e_j$ with a common extreme. Hence $(0,1)$ is decomposed into a finite family of open intervals of $\mathcal A_\e$ and the complement, which is a finite family of closed intervals.

Note that by Proposition \ref{sub-prop} we have 
 \begin{equation}\label{menoeps}
 |(0,1)\setminus \mathcal A_\e|=\Big|\Big\{t\in (0,1): |u_\e^\prime(t)|\geq\frac{c_\e}{\e}\Big\}\Big|\leq \e S. 
 \end{equation}

\begin{remark}[Estimates by localized interpolations]
\rm If $I\subset \mathcal A_\e$ then we have
  \begin{equation}
G_\e(u_\e; I)= \e^{1-2p_\e}\int_{I} |u_\e'|^2 \dx+ \e^{2k-1} \int_{I} |u_\e^{(k)}|^2\dx;
\end{equation}
that is, $G_\e(u_\e; I)$ can be expressed as a combination of the $L^2$ norms of $u'_\e$ and $u^{(k)}_\e$. This will allow us to estimate the $L^2$ norm of an intermediate derivative  $u^{(\ell)}_\e$ with $G_\e(u_\e; I)$
and the $L^2$ norms of $u'_\e$. More precisely, we will show that
\begin{eqnarray}\label{interp2-3}
\|u_\e^{(\ell)}\|^2_{L^2(I)}\le R_k\e^{1-2\ell-2p_\e\frac{k-\ell}{k-1}} G_\e(u_\e;I)
+R_k \frac{\|u_\e^\prime\|^2_{L^2(I)}}{|I|^{2(\ell-1)}}
\end{eqnarray}
for some explicit constant  $R_k$ depending only on $k$.

To show \eqref{interp2-3}, we begin by following the arguments of the proof of \cite[Lemma 7]{S-2} word for word. We first use the interpolation inequality
\begin{equation}\label{interp1}
 \|u^{(\ell)}\|^2_{L^2(I)}\leq R_k\|u^\prime\|^{2\theta}_{L^2(I)}\|u^{(k)}\|^{2(1-\theta)}_{L^2(I)}+R_k \frac{\|u^\prime\|^2_{L^2(I)}}{|I|^{2(\ell-1)}},
\end{equation}
 valid for $u\in H^k(I)$, 
where $\theta=\frac{k-\ell}{k-1}$ and $R_k$ is an explicit constant independent of $I$  (see \cite[Theorem 7.41]{leofrac}), and observe that for $\alpha>0$ we have
\begin{eqnarray}\label{interp2}
\e^\alpha \|u^{(k)}\|^{2(1-\theta)}_{L^2(I)} \|u^\prime\|^{2\theta}_{L^2(I)} 
\leq \e^{\alpha \frac{k-1}{\ell-1}} \|u^{(k)}\|^{2}_{L^2(I)}+\|u^\prime\|^{2}_{L^2(I)}. 
\end{eqnarray}
Now the proof departs from \cite[Lemma 7]{S-2}, in that we choose $\alpha=\alpha_{\e,\ell}=2\ell-2 +2p_\e\frac{\ell-1}{k-1}$, to obtain 
\begin{eqnarray}\label{interp2-1}
\e^{\alpha_{\e,\ell}} \|u_\e^{(k)}\|^{2(1-\theta)}_{L^2(I)} \|u_\e^\prime\|^{2\theta}_{L^2(I)} 
\leq \e^{2p_\e-1} \Big(\e^{2k-1} \|u_\e^{(k)}\|^{2}_{L^2(I)}+\e^{1-2p_\e}\|u_\e^\prime\|^{2}_{L^2(I)}\Big);
\end{eqnarray}
that is, 
\begin{eqnarray}\label{interp2-2}
 \|u_\e^{(k)}\|^{2(1-\theta)}_{L^2(I)} \|u_\e^\prime\|^{2\theta}_{L^2(I)} 
\leq \e^{1-2\ell-2p_\e\frac{k-\ell}{k-1}} G_\e(u_\e;I)
\end{eqnarray}
and eventually, plugging this inequality in \eqref{interp1}, we obtain the desired estimate 
\eqref{interp2-3}.
\end{remark}
 
 \subsubsection{Estimate of sets below threshold without points of small derivatives}\label{estt-1}
For any fixed $N\geq 1$ and for any $\e>0$ we introduce the set $\mathcal I_\e(N)\subset \mathcal I_\e$ of the indices $j$ such that 
\begin{equation}\label{defIN}
\Big|\Big\{t\in I_j: |u_\e^{(\ell)}(t)|<\frac{1}{N\e^\ell} \ \hbox{\rm for all } \ell\in\{2,\dots, k-1\}\Big\}\Big|>0;
\end{equation} 
that is, thinking of $N$ as a large number, those are indices whose corresponding interval contains points $t$ where all the derivatives are small from order $2$ to $k-1$.

If \eqref{defIN} fails; that is, at every point the derivative of some order $\ell$ is ``large'', then we can give an estimate of the size of $I_j$. This can be done by using \eqref{interp2-3} for some $\ell$. More in general, we have the following result. 

\begin{lemma}\label{lowerlemma} 
Let $N\geq 1$ be fixed,  let $\e\in(0,1)$ and $I\subseteq (0,1)$ be an open interval satisfying 

{\rm (i)} $|u_\e^\prime(t)|<\frac{c_\e}{\e}$ for all $t\in I$; 

{\rm (ii)} the set $\{t\in I: |u_\e^{(\ell)}(t)|<\frac{1}{N\e^\ell} \ \hbox{\rm for all } \ell\in\{2,\dots, k-1\}\}$ has measure $0$. 

\noindent
Then we have $|I|\leq R_k(N) \e^{1+\frac{p_\e}{k-1}}$ for some positive constant $R_k(N)$ independent of $\e$ and $I$. In particular, the estimate holds for all $\e\in(0,1)$ and $I_j$ with $j\in \mathcal I_\e\setminus \mathcal I_\e(N)$. 
\end{lemma}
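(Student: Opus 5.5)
The plan is to argue pointwise on $I$ with the intermediate derivatives and then invoke the localized interpolation estimate \eqref{interp2-3}. By (ii), for a.e.\ $t\in I$ there is some $\ell\in\{2,\dots,k-1\}$ with $|u_\e^{(\ell)}(t)|\ge \frac1{N\e^\ell}$. Set $E_\ell\coloneq\{t\in I: |u_\e^{(\ell)}(t)|\ge \frac1{N\e^\ell}\}$. Since these finitely many sets cover $I$ up to a null set, some $\ell$ satisfies $|E_\ell|\ge |I|/(k-2)$. For that $\ell$ we get the lower bound
\begin{equation*}
\|u_\e^{(\ell)}\|^2_{L^2(I)}\ge \frac{|I|}{(k-2)N^2\e^{2\ell}}.
\end{equation*}
(If $k=2$ there is no such $\ell$, condition (ii) cannot hold for a nonempty interval, and nothing is to be proved.)

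For the upper bound, hypothesis (i) gives $I\subset\mathcal A_\e$. The Remark on localized interpolations therefore applies, and \eqref{interp2-3} yields
\begin{equation*}
\|u_\e^{(\ell)}\|^2_{L^2(I)}\le R_k\e^{1-2\ell-2p_\e\frac{k-\ell}{k-1}}G_\e(u_\e;I)+R_k\frac{\|u_\e'\|^2_{L^2(I)}}{|I|^{2(\ell-1)}} .
\end{equation*}
To control $\|u'_\e\|^2_{L^2(I)}$ I would use the identity $G_\e(u_\e;I)=\e^{1-2p_\e}\|u'_\e\|^2_{L^2(I)}+\e^{2k-1}\|u_\e^{(k)}\|^2_{L^2(I)}$, valid on $\mathcal A_\e$. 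This gives $\|u_\e'\|^2_{L^2(I)}\le \e^{2p_\e-1}G_\e(u_\e;I)$. As a second option, (i) gives directly $\|u'_\e\|^2_{L^2(I)}\le |I|\e^{2p_\e-2}$, and I would keep whichever is better.

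Comparing the two bounds splits the argument into two cases. In the first case the second term on the right dominates. Then $|I|^{2\ell-1}\lesssim N^2 S\,\e^{2\ell-1+2p_\e}$, so $|I|\lesssim \e^{1+\frac{2p_\e}{2\ell-1}}$. Since $2\ell-1\le 2k-2$ and $\e<1$, this is at most $C\e^{1+\frac{p_\e}{k-1}}$, as required. In the second case the first term dominates. Then we only obtain $|I|\lesssim N^2 S\,\e^{1-2p_\e\theta}$ with $\theta=\frac{k-\ell}{k-1}$, which is weaker than the claim.

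To improve the second case, I would localize. Partition $I$ into consecutive subintervals $J$ of length $\lambda=M\e^{1+\frac{p_\e}{k-1}}$, apply the same dichotomy on each $J$ with $G_\e(u_\e;J)$ in place of $S$, and choose $M=M(N,k)$ so that the ``second-term'' alternative is excluded on every full subinterval. Each such $J$ would then carry energy $G_\e(u_\e;J)\gtrsim \lambda\,\e^{2p_\e\theta-1}/N^2$, and summing over $J$ with $G_\e(u_\e;(0,1))\le S$ would bound the number of subintervals. Alternatively, I would re-tune the Young-inequality exponent $\alpha$ in \eqref{interp2} to the length scale $\e^{1+p_\e/(k-1)}$, so that the two terms of \eqref{interp2-3} balance exactly at that scale.

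The main obstacle is the second case: making the exponent of $\e$ come out exactly as $1+\frac{p_\e}{k-1}$ rather than $1-2p_\e\theta$. This is where the choice of $\alpha_{\e,\ell}$ and of the localization scale must be matched. I expect the worst index to be $\ell=k-1$, which is where the factor $\frac1{k-1}$ should appear. Finally, the ``in particular'' statement is immediate: for $j\in\mathcal I_\e\setminus\mathcal I_\e(N)$ the interval $I_j$ satisfies (i) by the definition of $\mathcal A_\e$, and satisfies (ii) by the negation of \eqref{defIN}.
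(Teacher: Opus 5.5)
Your scheme is the one behind the paper's proof: pigeonhole in $\ell$, the lower bound $\|u_\e^{(\ell)}\|^2_{L^2(I)}\ge |I|/((k-2)N^2\e^{2\ell})$, the upper bound from the localized interpolation, and a comparison of its two terms. Your first case is correct.

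The gap is your second case, which you leave unresolved. The weak bound $|I|\lesssim \e^{1-2p_\e\theta}$ comes from taking the exponent in \eqref{interp2-2}--\eqref{interp2-3} at face value, but that exponent is not what \eqref{interp2-1} gives. Divide \eqref{interp2-1} by $\e^{\alpha_{\e,\ell}}$, or bound the product directly using $\|u_\e'\|^2_{L^2(I)}\le \e^{2p_\e-1}G_\e(u_\e;I)$ and $\|u_\e^{(k)}\|^2_{L^2(I)}\le \e^{1-2k}G_\e(u_\e;I)$. Either way you get
\[
\|u_\e'\|^{2\theta}_{L^2(I)}\|u_\e^{(k)}\|^{2(1-\theta)}_{L^2(I)}\le \e^{(2p_\e-1)\theta+(1-2k)(1-\theta)}G_\e(u_\e;I)=\e^{1-2\ell+2p_\e\frac{k-\ell}{k-1}}G_\e(u_\e;I),
\]
because $(2k-2)\theta=2(k-\ell)$. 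So the $p_\e$-term carries a plus sign. The displayed version is still true, since $\e<1$, but it is strictly weaker, and too weak for this lemma.

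With the correct exponent, your second case reads $|I|/((k-2)N^2\e^{2\ell})\le 2R_kS\,\e^{1-2\ell+2p_\e\theta}$, that is,
\[
|I|\le 2(k-2)R_kN^2S\,\e^{1+2p_\e\frac{k-\ell}{k-1}}\le 2(k-2)R_kN^2S\,\e^{1+\frac{p_\e}{k-1}},
\]
using $\frac{k-\ell}{k-1}\ge\frac1{k-1}$ and $\e<1$. This is exactly the estimate $\e^{1+2p_\e\frac{k-\ell}{k-1}}$ in the paper's proof. It is worst at $\ell=k-1$, as you guessed. Together with your first case, it proves the lemma.

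The repairs you sketch would not have helped if the exponent really were $1-2p_\e\theta$. In the alternative where the $G_\e$-term dominates, both sides of $|J|\lesssim N^2\e^{1-2p_\e\theta}G_\e(u_\e;J)$ are additive in $J$. Partitioning $I$ and summing therefore only reproduces the global bound. Likewise, no choice of $\alpha$ in \eqref{interp2} can beat the direct product bound above: \eqref{interp2} is just one way of estimating that product, and $\alpha_{\e,\ell}$ is already the balancing choice. The missing ingredient is not a new localization argument but this one-line recomputation of the exponent.
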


\begin{proof} The proof follows word for word that of \cite[Lemma 6]{S-2}, with the due changes necessary having modified the local interpolation inequality \eqref{interp2-3}. As a result, for all $\ell
\in\{2,\ldots,k-1\}$ we obtain an estimate
$$
|I|\leq R_k(N) \max\{\e^{1+2p_\e\frac{k-\ell}{k-1}}, \e^{\frac{2\ell-1}{2(\ell-1)}}\}=R_k(N) \e^{1+2p_\e\frac{k-\ell}{k-1}}
$$
for $\e$ small enough since $p_\e\to 0$. This implies the claim of the lemma.
\end{proof}

\begin{remark}\label{exponents}\rm
In the following the term $\e^{\frac{p_\e}{k-1}}$ will play the role that the term $\e^{\frac{1}{k-2}}$ has in \cite{S-2}, their relevant property is to be both infinitesimal as $\e\to 0$. For the first one this is ensured by claim (a) in Lemma \ref{MN-lemma}.
\end{remark}

If $j\in \mathcal I_\e(N)$, then we define 
\begin{eqnarray}\label{ab}\nonumber
&&a_j^{\e,N}\coloneq\inf\Big\{t\in I_j: |u_\e^{(\ell)}(t)|\le\frac{1}{N\e^\ell} \ \hbox{\rm for all } \ell\in\{2,\dots, k-1\}\Big\}\\
&&b_j^{\e,N}\coloneq\sup\Big\{t\in I_j: |u_\e^{(\ell)}(t)|\le\frac{1}{N\e^\ell} \ \hbox{\rm for all } \ell\in\{2,\dots, k-1\}\Big\}.
\end{eqnarray}
We set $$\mathcal A_\e(N)\coloneq \bigcup_{j\in \mathcal I_\e(N)} (a_j^{\e,N},b_j^{\e,N}). $$

Note that
\begin{equation}
|I_j\setminus (a_j^{\e,N},b_j^{\e,N})|\le 2R_k \e^{1+\frac{p_\e}{k-1}},
\end{equation}
since $I_j\setminus (a_j^{\e,N},b_j^{\e,N})$ is composed of two boundary intervals satisfying the hypotheses of Lem\-ma~{\rm\ref{lowerlemma}} by definition. 

\bigskip
Lemma \ref{lowerlemma} also allows us to provide an estimate for the measure of $I\cap  \mathcal A_\e$ for small intervals $I$ without points below threshold and with small derivatives.

\begin{lemma}\label{osck-lemma}  
If $I\subseteq (0,1)$ is an open interval such that $|I|\le\e$ and $I\cap \mathcal A_\e(N)=\emptyset$, 
then, $\#\{j\in\mathcal I_\e: I_j\cap I\neq \emptyset\}\leq k$. In particular, by Lemma {\rm\ref{lowerlemma}} we have $|I\cap  \mathcal A_\e|\le kR_k(N) \e^{1+\frac{p_\e}{k-1}}$.
\end{lemma}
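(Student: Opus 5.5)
The plan is to argue by a Rolle-type counting argument on the zeros of $u_\e''$, in the spirit of the corresponding lemma in \cite{S-2}. The idea is that between two consecutive intervals of $\mathcal A_\e$ meeting $I$ the derivative $u_\e'$ must leave and re-enter the region $|u'_\e|<c_\e/\e$; on short intervals this forces too many sign changes of higher derivatives unless they are large.

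First I would reduce to the relevant configuration. Suppose by contradiction that $I$ meets at least $k+1$ distinct intervals $I_{j_0},\dots,I_{j_k}$, ordered from left to right. Since these intervals are separated by closed intervals where $|u_\e'|\ge c_\e/\e$, the following holds. Between consecutive $I_{j_i}$ and $I_{j_{i+1}}$ there is a point where $|u_\e'|=c_\e/\e$, and inside each $I_{j_i}\cap I$ (for the interior ones, $1\le i\le k-1$, which are entirely contained in $I$) there is a point where $|u'_\e|<c_\e/\e$. Hence $|u'_\e|$ attains the value $c_\e/\e$ at both endpoints of each interior interval $I_{j_i}$ and is smaller inside.

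Next I would locate the critical points. On each interior $I_{j_i}$ either $u_\e'$ equals the same value $\pm c_\e/\e$ at both ends, and then $u_\e''$ vanishes inside, or it takes opposite values, and then $|u_\e''|\ge 2c_\e/(\e|I|)\ge 2c_\e/\e^2$ somewhere. The first alternative at many consecutive intervals produces, by iterated Rolle, zeros of $u_\e^{(\ell)}$ for $\ell$ up to $k-1$ at increasing orders. The key point is that $I\cap\mathcal A_\e(N)=\emptyset$ means every interior $I_{j_i}$ intersected with $I$ lies in $I_{j_i}\setminus(a_{j_i}^{\e,N},b_{j_i}^{\e,N})$. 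So either $j_i\notin\mathcal I_\e(N)$, or at almost every point of $I\cap I_{j_i}$ some derivative $|u_\e^{(\ell)}|\ge 1/(N\e^\ell)$ with $2\le\ell\le k-1$.

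The step I expect to be the main obstacle is extracting a contradiction from $k+1$ intervals. I would first try the approach of \cite[Lemma 8]{S-2} essentially verbatim. Take $k$ points in distinct interior intervals where $|u_\e'|<c_\e/\e$, together with the interlacing points where $|u'_\e|=c_\e/\e$. Using divided differences, or Taylor expansion of $u_\e'$ with a polynomial interpolant of degree $k-2$ on $I$ of length $\le\e$, show that the $(k-1)$-th divided difference controls $u_\e^{(k)}$ in $L^2$. Then $\e^{2k-1}\int_I|u_\e^{(k)}|^2$ is either large, which is impossible by the uniform energy bound after summing over disjoint such $I$, or the lower-order derivatives are uniformly of order at most $1/(N\e^\ell)$ on a subset of positive measure of some $I_{j_i}$, contradicting $I\cap\mathcal A_\e(N)=\emptyset$. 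The place where the present setting differs from \cite{S-2} is that the threshold is $c_\e/\e$ instead of $1/\sqrt\e$. Since $c_\e\to0$, the oscillation of $u_\e'$ between two threshold crossings is at most $2c_\e/\e$, which is even smaller after scaling, so the estimates only improve. I would check that every occurrence of $\e^{1/(k-2)}$ is replaced by $\e^{p_\e/(k-1)}$, as in Remark~\ref{exponents}.

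Once the bound $\#\{j: I_j\cap I\ne\emptyset\}\le k$ is established, the final claim is immediate. Each such $I_j\cap I$ is contained in $I_j\setminus(a_j^{\e,N},b_j^{\e,N})$, which consists of at most two intervals satisfying (i)--(ii) of Lemma~\ref{lowerlemma}. Actually only one component meets $I$ unless $I_j$ is interior, and in that case $j\notin\mathcal I_\e(N)$ and the whole interval satisfies (ii). So $|I\cap I_j|\le R_k(N)\e^{1+\frac{p_\e}{k-1}}$, and summing over the at most $k$ indices gives $|I\cap\mathcal A_\e|\le kR_k(N)\e^{1+\frac{p_\e}{k-1}}$.
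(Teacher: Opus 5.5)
\textbf{There is a genuine gap, and it sits in the step you flag as the main obstacle.} Your dichotomy says that either $\e^{2k-1}\int_I|u_\e^{(k)}|^2$ is large, or the derivatives of order $2,\dots,k-1$ fall below $1/(N\e^\ell)$ on part of a component. This dichotomy does not hold. The energy bound is global, so a single interval of length $\e$ can carry energy of order one; summing over disjoint intervals says nothing about the particular $I$. Order-one energy on a scale-$\e$ interval, even with zeros available, only gives $|u_\e^{(\ell)}|\le C\sqrt S\,\e^{-\ell}$, not $<1/(N\e^\ell)$. The divided-difference step also runs the wrong way, since $\|u_\e^{(k)}\|_{L^2}$ bounds divided differences from above. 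Moreover, the data at the crossing points, $|u_\e'|=c_\e/\e$, have size $c_\e\to0$ after rescaling. So the smallness of $c_\e$, which you say ``only improves'' the estimates, is exactly why nothing is forced.

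In fact the bound by $k$ fails for every $k\ge3$, so no argument can close the step. Take $m$ with $2m\ge k+1$, $\omega=2\pi m$, $\beta=2/(N\omega)$, and a smooth cutoff $\psi$ equal to $1$ on $(-\infty,1]$ and to $0$ on $[2,+\infty)$. Set $g(s)=\psi(s)\cos(\omega s)$ and $u_\e'(x)=\frac{\beta}{\e}g(x/\e)$. Then:
\begin{itemize}
\item for small $\e$, $F_\e(u_\e)\le 3+\beta^2\int_0^2|g^{(k-1)}|^2\,ds$;
\item $I=(0,\e)$ meets exactly $2m$ components of $\mathcal A_\e$, all contained in $I$;
\item on each of these components $|u_\e''|\ge\frac{\beta\omega}{\e^2}\sqrt{1-c_\e^2/\beta^2}>\frac1{N\e^2}$.
\end{itemize}
Hence none of these components is in $\mathcal I_\e(N)$, so $I\cap\mathcal A_\e(N)=\emptyset$, yet $2m>k$ components meet $I$. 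This is exactly where your own case analysis leads for $k=3$. A component on which $u_\e'$ has equal endpoint values contains a zero of $u_\e''$, so it lies in $\mathcal I_\e(N)$. Therefore every interior component must switch sign, and nothing forbids that.

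\textbf{The paper's sketch has the same hole, and its Rolle step is misstated.} The rescaled derivatives are equibounded and H\"older continuous, but boundedness is not smallness below $1/N$. Also, $|u_\e'|\ge c_\e/\e$ on the gaps, so $u_\e'$ cannot vanish there. What Rolle does give is a zero of $u_\e''$ in each full gap, because there $u_\e'$ keeps its sign and equals the same value $\pm c_\e/\e$ at both ends. If $I$ contains $k-2$ full gaps, iterating gives zeros of $u_\e^{(\ell)}$ in $I$ for $\ell=2,\dots,k-1$. With $E_I:=\e^{2k-1}\int_I|u_\e^{(k)}|^2$, this yields $|u_\e^{(\ell)}|\le E_I^{1/2}\e^{-\ell}$ on $I$. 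That contradicts $I\cap\mathcal A_\e(N)=\emptyset$ only when $E_I<1/N^2$.

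Grouping the gaps into disjoint blocks of $k-2$ and applying Cauchy--Schwarz over the blocks gives instead
$\#\{j:I_j\cap I\ne\emptyset\}\le k\,(1+N E_I^{1/2})$.
Applying Cauchy--Schwarz again over a partition of $I(\e)$ into intervals of length $\e$, this weaker bound still suffices for Proposition~\ref{stimatausigma1}. Your last paragraph, which passes from the count to $|I\cap\mathcal A_\e|$ via Lemma~\ref{lowerlemma}, is fine.
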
 

\begin{proof} The proof is exactly as in \cite[Lemma 8]{S-2}, since we do not use any property of the energies $F_\e$. 
We briefly illustrate the argument by contradiction, referring to the proof of \cite[Lemma 8]{S-2} for finer details. Suppose that there exist $\e_n\to0$ and $I^n$ intervals with  $|I^n|\le\e_n$ and $I^n\cap \mathcal A_{\e_n}(N)=\emptyset$, but $\#\{j\in\mathcal I_{\e_n}: I_j\cap I^n\neq \emptyset\}\ge k+1$. Note that at the boundary of each interval in the complement of $A_{\e_n}$ the values of $u_{\e_n}$ coincide; hence, by Rolle's theorem there exists a point in that interval such that $u'_{\e_n}=0$. We deduce that there exist at least $k$ such points. Using the same argument, we prove that there exist $k-1$ points where $u''_{\e_n}=0$, and, proceeding iteratively, in particular deduce that for each $\ell\in\{1,\ldots,k-1\}$ there exists at least one point such that $u^{(\ell)}_{\e_n}=0$. Up to translations, we can suppose that $I^n=(0,\lambda_n)$ for some $\lambda_n\le \e_n$. We then consider the functions $v_n(t)\coloneq u_{\e_n}(\lambda_n t)$, which satisfy $v^{(\ell)}_n=0$ at some point in $(0,1)$, and their derivatives are equi-bounded in $H^{k-1}$ by the boundedness of the energy and Poincar\'e's inequality.  Using the $1/2$-H\"older continuity of all $v_n^{(\ell)}$ this is in contradiction with supposing that $I^n\cap \mathcal A_{\e_n}(N)=\emptyset$.
\end{proof} 

%

The following result states that intervals without points both below threshold and with small derivatives have a small portion below threshold. 

\begin{proposition}[Asymptotics for the relative measure of intervals below threshold]\label{stimatausigma1} 
Let $N\geq 1$ be a fixed integer and  $\e\in(0,1)$. 
If  
$\{I(\e)\}$ is a family of intervals such that
$|I(\e)|>\!>\e^{1+\frac{p}{k-1}}$
and $I(\e)\cap \mathcal A_\e(N) 
=\emptyset$,  
then 
\begin{equation}\label{stimaprop}
\lim_{\e\to 0}\frac{|I(\e) \cap 
\mathcal A_\e |}{|I(\e)|}
=0. 
\end{equation}
\end{proposition}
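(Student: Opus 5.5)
The plan is to cover $I(\e)$ by consecutive subintervals of length comparable to $\e$ and apply Lemma~\ref{osck-lemma} to each of them. Write $I(\e)=(\alpha_\e,\beta_\e)$ and $M_\e\coloneq\lceil |I(\e)|/\e\rceil$. Split $I(\e)$ into $M_\e$ consecutive open intervals $J_1,\dots,J_{M_\e}$, each of length $|I(\e)|/M_\e\le\e$; the finitely many division points have measure zero. Each $J_i$ is contained in $I(\e)$, so $J_i\cap\mathcal A_\e(N)=\emptyset$. Lemma~\ref{osck-lemma} then gives
$$
|J_i\cap\mathcal A_\e|\le kR_k(N)\,\e^{1+\frac{p_\e}{k-1}}\qquad\hbox{for all } i .
$$
Summing over $i$,
$$
|I(\e)\cap\mathcal A_\e|\le M_\e\, kR_k(N)\,\e^{1+\frac{p_\e}{k-1}}.
$$

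Next I estimate $M_\e$ in two regimes. If $|I(\e)|\ge\e$, then $M_\e\le 2|I(\e)|/\e$, and
$$
\frac{|I(\e)\cap\mathcal A_\e|}{|I(\e)|}\le 2kR_k(N)\,\e^{\frac{p_\e}{k-1}}=2kR_k(N)\,c_\e^{\frac1{k-1}}.
$$
This tends to $0$ by Lemma~\ref{MN-lemma}(a) (see also Remark~\ref{exponents}). If instead $|I(\e)|<\e$, then $M_\e=1$ and Lemma~\ref{osck-lemma} applies directly to $I(\e)$, giving
$$
\frac{|I(\e)\cap\mathcal A_\e|}{|I(\e)|}\le kR_k(N)\,\frac{\e^{1+\frac{p_\e}{k-1}}}{|I(\e)|}.
$$
This is infinitesimal exactly by the hypothesis $|I(\e)|\gg\e^{1+\frac{p_\e}{k-1}}$, which I read with $p=p_\e$. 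For a general family I pass to subsequences along which one of the two regimes holds, or simply combine the two bounds into a single estimate:
$$
\frac{|I(\e)\cap\mathcal A_\e|}{|I(\e)|}\le kR_k(N)\,\e^{1+\frac{p_\e}{k-1}}\Big(\frac{2}{\e}+\frac{1}{|I(\e)|}\Big).
$$
Both terms vanish, which proves \eqref{stimaprop}.

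The main point to check is that Lemma~\ref{osck-lemma} really gives a bound uniform in the position of the subinterval. Its conclusion uses only $|J|\le\e$ and $J\cap\mathcal A_\e(N)=\emptyset$, and the constant $R_k(N)$ comes from Lemma~\ref{lowerlemma}, which is independent of the interval, so this holds. A second point is that pieces of $\mathcal A_\e$-intervals $I_j$ crossing the division points are not double-counted. Since we only measure $|J_i\cap\mathcal A_\e|$ and the $J_i$ are disjoint, this is automatic. Lemma~\ref{osck-lemma} bounds the full $|I_j|$ of the intervals meeting $J_i$, which is still an upper bound for the intersection.
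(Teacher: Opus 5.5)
Your proof is correct and follows the same route as the argument the paper imports from \cite{S-2}: you tile $I(\e)$ by at most $1+|I(\e)|/\e$ subintervals of length at most $\e$ and apply Lemma~\ref{osck-lemma} to each, then conclude using $\e^{p_\e/(k-1)}=c_\e^{1/(k-1)}\to0$ for long intervals and the hypothesis $|I(\e)|\gg\e^{1+\frac{p_\e}{k-1}}$ for short ones. One small inaccuracy in your last paragraph: Lemma~\ref{osck-lemma} does not bound the full length of every $I_j$ meeting $J_i$, since for $j\in\mathcal I_\e(N)$ the interval $I_j$ may be long; it bounds only $|J_i\cap I_j|$, which lies in one of the two boundary pieces of $I_j\setminus(a_j^{\e,N},b_j^{\e,N})$ controlled by Lemma~\ref{lowerlemma}, and because you only use the lemma's stated conclusion this does not affect the argument.
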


\begin{proof}
The  proof is the same as that of Proposition 9,  Remark 10 and Proposition 11 in \cite{S-2}, up to substituting the exponent $\frac{4k-3}{4k-4}$  with ${1+\frac{p_\e}{k-1}}$.
\end{proof}

This result will be used to estimate the contribution of $G_\e(u_\e, I(\e))$ when $I(\e)$ are maximal intervals in $(0,1)\setminus A_\e(N)$, of the form $I(\e)=[b_j^{\e,N}, a_{j'}^{\e,N}]$, and \eqref{stimaprop} holds. In this case, if $z_\e\coloneq u_\e (a_{j'}^{\e,N})-u_\e (b_{j}^{\e,N})$, we have 
\begin{eqnarray}\label{pre-fa}\nonumber
G_\e(u_\e, I(\e))&\ge& \frac1\e|I(\e)\setminus \mathcal A_\e |+ \e^{2k-1} \int_{I(\e)} |u_\e^{(k)}|^2\dx\\
\nonumber&\ge&\frac{|I(\e)\setminus \mathcal A_\e |}{|I(\e)|}\Big( \frac1\e|I(\e)|+ \e^{2k-1} \int_{I(\e)} |u_\e^{(k)}|^2\dx\Big)\\ \nonumber
&\ge&\frac{|I(\e)\setminus \mathcal A_\e |}{|I(\e)|}\min_{T,v}\biggl\{T +\int_0^T |v^{(k)}|^2dt:  v(T)-v(0)= z_\e,\   |v'(0)|, |v'(T)|\le c_\e,\\
&&\hskip 2cm |v^{(\ell)}(0)|, |v^{(\ell)}(T)|\le \frac1N\hbox{ for all }\ell\in\{2,\ldots, k-1\}\Big\},
\end{eqnarray}
where we have scaled the function $u_\e$ and taken into account the boundary conditions for $\ell\in\{1,\ldots, k-1\}$ due to the definition of $A_\e(N)$.
We can also further bound the minimum in the last line  by
\begin{eqnarray*}
&&\min_{T,v}\biggl\{T +\int_0^T |v^{(k)}|^2dt:  v(T)-v(0)= z_\e,\\
&&\hskip 2cm |v^{(\ell)}(0)|, |v^{(\ell)}(T)|\le \frac1N\hbox{ for all }\ell\in\{1,\ldots, k-1\}\Big\},
\end{eqnarray*}
upon assuming that $c_\e\le \frac1N$. 
We will show that this minimum provides a good lower bound for vales of $z_\e$ uniformly far from $0$, while a finer argument must be used for small $z_\e$. This additional analysis is necessary since energy densities depending on the jump size must have an infinite derivative at $0$.

\subsubsection{Fine analysis of small sets below threshold}\label{estt-2}
Still following \cite{S-2}, we introduce new families of intervals in $\mathcal I_\e$ in order to characterize the limit energy density for small values of the jump size. With fixed $r\in(0,1)$ we set
\begin{equation}\label{irr}
\mathcal I^r_\e=\Big\{ j\in \mathcal I_\e : \int_{I_j} |u'_\e|^2 dt<r\Big(\frac{c_\e}\e\Big)^2\,|I_j|\Big\};
\end{equation}
that is, intervals below threshold and such that 
$$
\e^{1-2p_\e} \int_{I_j} |u'_\e|^2 dt< r\frac{|I_j|}\e.
$$
Note that the left-hand side of this equation is the part of $G_\e(u_\e, I_j)$ depending on $u'_\e$ in \eqref{defGI}. The value of $r$ in the rest of the paper will be fixed such that it satisfies the claim of the following lemma, which shows that we have a lower bound for the length of each interval in $\mathcal I^r_\e$. Note that we may suppose that $r$ is such that the number of intervals in $\mathcal I^r_\e$ is finite.

\begin{lemma}
\label{upperlemma}  
There exist a threshold $r=r_k\in (0,1)$ 
and a constant $\widetilde C=\widetilde C_k>0$ 
such that for all $\e\in(0,1)$ and for all intervals $I$ satisfying 

{\rm (i)} $|u_\e^\prime|<\frac{c_\e}{\e}$ in $I$ and $|u_\e^\prime|=\frac{c_\e}{\e}$ at at least one of the endpoints,

{\rm (ii)} $\int_I (u^\prime_\e)^2\, dt<r (\frac{c_\e}\e )^2|I|$,

\noindent
we have $|I|\geq \widetilde C \e^{1+\frac{2p_\e}{k-1}}$.
In particular, the claim holds for all $\e\in(0,1)$ and $I_j$ with $j\in  \mathcal I_\e^{r}$. 
\end{lemma}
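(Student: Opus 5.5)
The plan is to follow the scheme of the corresponding lemma in \cite{S-2}: combine the endpoint condition (i) with the averaged smallness (ii) to force a large oscillation of $u'_\e$ on $I$, and then control that oscillation through the interpolation inequality \eqref{interp2-3} and the energy bound $G_\e(u_\e;I)\le S$. Let $t_0$ be an endpoint with $|u'_\e(t_0)|=c_\e/\e$. By (ii) there is a set of positive measure in $I$ where $|u'_\e|^2<r(c_\e/\e)^2$; in particular there is $t_1\in I$ with $|u'_\e(t_1)|\le \sqrt r\, c_\e/\e$. Hence
$$
(1-\sqrt r)\frac{c_\e}{\e}\le |u'_\e(t_0)-u'_\e(t_1)|\le \int_I|u''_\e|\,dt\le |I|^{1/2}\|u''_\e\|_{L^2(I)} .
$$

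Next I bound $\|u''_\e\|_{L^2(I)}$ using \eqref{interp2-3} with $\ell=2$ (for $k=2$, directly from the energy). Since $I$ lies below threshold, $G_\e(u_\e;I)\le S$. By (ii), $\|u'_\e\|^2_{L^2(I)}\le r(c_\e/\e)^2|I|$. Therefore
$$
\|u''_\e\|^2_{L^2(I)}\le R_k S\,\e^{-3-2p_\e\frac{k-2}{k-1}}+R_k\, r\frac{c_\e^2}{\e^2}\,\frac{1}{|I|}.
$$
Combining with the first display gives
$$
(1-\sqrt r)^2\frac{c_\e^2}{\e^2}\le R_k S\,|I|\,\e^{-3-2p_\e\frac{k-2}{k-1}}+R_k\, r\frac{c_\e^2}{\e^2}.
$$
Now I fix $r=r_k$ small enough that $R_k r\le\frac12(1-\sqrt r)^2$. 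The second term can then be absorbed, leaving
$$
|I|\ge \frac{(1-\sqrt r)^2}{2R_kS}\,c_\e^2\,\e^{1+2p_\e\frac{k-2}{k-1}}=\widetilde C\,\e^{1+2p_\e+2p_\e\frac{k-2}{k-1}} .
$$

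The main obstacle is the exponent. This crude bound yields $\e^{1+2p_\e\frac{2k-3}{k-1}}$ rather than the claimed $\e^{1+\frac{2p_\e}{k-1}}$. The factor $c_\e^2$ is lost because the $u'$ coefficient $\e^{1-2p_\e}$ was not exploited. To fix this, I would avoid passing through $u''$ with the global $\alpha_{\e,\ell}$ scaling. Instead, I would apply the interpolation \eqref{interp1} with $\ell=2$ directly, in its multiplicative form $\|u''\|^2\lesssim \|u'\|^{2\theta}\|u^{(k)}\|^{2(1-\theta)}+\|u'\|^2/|I|^2$ with $\theta=\frac{k-2}{k-1}$. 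Here I would insert $\|u'_\e\|^2\le r(c_\e/\e)^2|I|$ and $\|u^{(k)}_\e\|^2\le S\e^{1-2k}$ separately. This gives
$$
\frac{c_\e^2}{\e^2}\lesssim |I|\Big(\frac{c_\e^2|I|}{\e^2}\Big)^{\theta}\big(\e^{1-2k}\big)^{1-\theta}+r\frac{c_\e^2}{\e^2}.
$$
After absorbing the last term for small $r$, solving for $|I|$ gives $|I|^{1+\theta}\gtrsim c_\e^{2(1-\theta)}\e^{\frac{2k}{k-1}}$. Since $1+\theta=\frac{2k-3}{k-1}$, I expect this to produce a power of $\e$ of order $1$ times a power of $c_\e$ of order $\frac{2}{k-1}\cdot\frac{k-1}{2k-3}$, which I would compare with $\e^{\frac{2p_\e}{k-1}}$. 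Getting this bookkeeping to match the stated exponent is the delicate part. If it is not sharp, I would sharpen it by using that the oscillation occurs on the sub-interval between $t_0$ and $t_1$ rather than on all of $I$. The final claim for $I_j$ with $j\in\mathcal I^r_\e$ then follows, because each such $I_j$ satisfies (i) and (ii) by the definitions of $\mathcal A_\e$ and \eqref{irr}.
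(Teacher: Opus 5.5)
Your second (multiplicative) route is correct and proves the lemma once the arithmetic is finished. The first route via \eqref{interp2-3} is, as you observed, too weak for $k\ge 3$ and can simply be dropped; for $k=2$ your direct estimate $\|u_\e''\|^2_{L^2(I)}\le S\e^{-3}$ already gives $|I|\ge\widetilde C\e^{1+2p_\e}$.

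What goes wrong is that you stop short, and the one exponent you display is incorrect. Start from the oscillation bound $(1-\sqrt r)^2c_\e^2/\e^2\le |I|\,\|u_\e''\|^2_{L^2(I)}$. Insert $\|u'_\e\|^2_{L^2(I)}<r(c_\e/\e)^2|I|$ and $\|u^{(k)}_\e\|^2_{L^2(I)}\le S\e^{1-2k}$ into \eqref{interp1} with $\ell=2$ and $\theta=\frac{k-2}{k-1}$. Absorb $R_k\|u'_\e\|^2_{L^2(I)}/|I|\le R_kr\,c_\e^2/\e^2$ using $R_kr\le\frac12(1-\sqrt r)^2$. This gives
\[
\tfrac12(1-\sqrt r)^2\,\frac{c_\e^2}{\e^2}\le R_k\,r^\theta S^{1-\theta}\,|I|^{1+\theta}\,\frac{c_\e^{2\theta}}{\e^{2\theta}}\,\e^{-(2k-1)(1-\theta)},
\]
that is, $|I|^{\frac{2k-3}{k-1}}\ge C\,c_\e^{\frac{2}{k-1}}\,\e^{\frac{2k-3}{k-1}}$. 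You wrote $\e^{\frac{2k}{k-1}}$ instead; taken literally, that would leave $|I|\ge C c_\e^{\frac{2}{2k-3}}\e^{\frac{2k}{2k-3}}$, a power of $\e$ strictly larger than $1$, which does not imply the claim. With the correct exponent you get
\[
|I|\ge \widetilde C\,\e\,c_\e^{\frac{2}{2k-3}}=\widetilde C\,\e^{1+\frac{2p_\e}{2k-3}}\ge \widetilde C\,\e^{1+\frac{2p_\e}{k-1}},
\]
because $2k-3\ge k-1$ and $\e<1$. Here $\widetilde C$ depends only on $k$ and the energy bound $S$ once $r=r_k$ is fixed.

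So the ``delicate bookkeeping'' is immediate. No further localization to the subinterval between $t_0$ and $t_1$ is needed, and for $k\ge3$ you obtain a strictly stronger bound than the one stated; the two coincide for $k=2$. The paper gives no argument beyond a reference to \cite[Lemma 12]{S-2} with a change of exponent. Your corrected oscillation-plus-interpolation computation is therefore a self-contained proof, and it shows the stated exponent is not sharp. The final claim for $I_j$ with $j\in\mathcal I^r_\e$ follows as you say; the degenerate case $I_j=(0,1)$, where (i) fails, is trivial.
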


\begin{proof}  The proof is the same as that of \cite[Lemma 12]{S-2}, again up to substituting the exponent $\frac{4k-3}{4k-4}$ therein with ${1+\frac{p_\e}{k-1}}$.
\end{proof}

For any $N\geq 1$, we define $\mathcal I_\e^\ast(N)$ as the set of indices $j\in \mathcal I^r_\e$ such that 
\begin{equation}\label{definast} 
\Big|\Big\{t\in I_j: |u_\e^{(\ell)}(t)|<\frac{1}{N\e^\ell} \ \hbox{\rm for all } \ell\in\{1,\dots, \ell(k)\}\Big\}\Big|>0, 
\end{equation} 
where $\ell(k)=\max\{ \ell\in\mathbb N: 2\ell<k+1\}$.
Note that, since $|u_\e'|<\frac{c_\e}{\e}$ in each $I_j$, 
then the inequality for $\ell=1$ is true for all $t\in I_j$ as soon as $c_\e<\frac{1}{N}$. 
In this case, $\mathcal I_\e^\ast(N)$ contains $\mathcal I_\e(N)$.

The following lemma states that the union of all intervals in $\mathcal I_\e^{r}$ whose points do not have small derivatives up to order $\ell(k)$ as in \eqref{definast} has a small measure. We use the notation
$$
\mathcal A_\e^r\coloneq\bigcup_{j\in \mathcal I_\e^r}I_j,\qquad \mathcal A_\e^{\ast}(N)\coloneq\bigcup_{j\in \mathcal I_\e^\ast(N)}I_j.
$$

\begin{lemma}\label{lemmar}  
Let $N\geq 1$ be fixed  and let $r$ be fixed as above. 
Then there exists a threshold $\e(N)\in (0,1)$  
such that $|\mathcal A_\e^r\setminus \mathcal A_\e^{\ast}(N)|\leq R_k k^2 S N^2\e^{1+p_\e}$
for all $\e\in (0,\e(N))$. 
\end{lemma}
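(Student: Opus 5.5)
The plan is to bound each $|I_j|$, for $j\in\mathcal I^r_\e\setminus\mathcal I^\ast_\e(N)$, by $\e^{1+p_\e}$ times the $k$-th order energy on $I_j$, plus a small multiple of $|I_j|$ that can be absorbed on the left. Summing over $j$ and using $\sum_j\e^{2k-1}\int_{I_j}|u_\e^{(k)}|^2\le S$ then gives the claim.

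\emph{Step 1 (reduction to a Chebyshev bound).} Take $\e(N)$ so small that $c_\e<\frac1N$. Then the condition in \eqref{definast} for $\ell=1$ holds everywhere on $I_j$. So if $j\notin\mathcal I^\ast_\e(N)$, almost every $t\in I_j$ satisfies $|u_\e^{(\ell)}(t)|\ge \frac1{N\e^\ell}$ for some $\ell\in\{2,\dots,\ell(k)\}$. Chebyshev's inequality then gives
$$
|I_j|\le \sum_{\ell=2}^{\ell(k)} N^2\e^{2\ell}\|u_\e^{(\ell)}\|^2_{L^2(I_j)} .
$$

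\emph{Step 2 (interpolation with a tuned Young inequality).} Apply \eqref{interp1} on $I_j$ with $\theta=\frac{k-\ell}{k-1}$, but do not use the choice of $\alpha$ made in \eqref{interp2-1}; that choice would only give $\e^{1-2p_\e\theta}$. Instead set
$$
a\coloneq\e^{2k-1}\|u_\e^{(k)}\|^2_{L^2(I_j)},\qquad b\coloneq\e^{1-2p_\e}\|u_\e'\|^2_{L^2(I_j)}.
$$
By the definition \eqref{irr} of $\mathcal I^r_\e$ we have $b<r|I_j|/\e$. A direct computation gives $(2k-1)(1-\theta)+\theta=2\ell-1$, hence
$$
\e^{2\ell}\|u_\e^{(k)}\|^{2(1-\theta)}\|u_\e'\|^{2\theta}=\e^{1+2p_\e\theta}a^{1-\theta}b^\theta .
$$
Use the weighted Young inequality $a^{1-\theta}b^\theta\le\lambda a+\lambda^{-(1-\theta)/\theta}b$ with $\lambda=\mu\,\e^{2p_\e\theta^2/(1-\theta)}$, where $\mu=\mu(N,k)$ is a constant. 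This yields
$$
\e^{1+2p_\e\theta}a^{1-\theta}b^\theta\le \mu\,\e^{1+2p_\e\frac{\theta}{1-\theta}}\,a+\mu^{-\frac{1-\theta}\theta}\,r\,|I_j| .
$$
Choose $\mu$ so that the second term, multiplied by $N^2R_k$ and summed over the at most $k$ values of $\ell$, is at most $\frac14|I_j|$. Since $\ell\le\ell(k)$ means $2\ell<k+1$, we have $k-\ell>\ell-1$, so $\frac\theta{1-\theta}>1$. The first term is therefore at most $\mu\,\e^{1+p_\e}\,c_\e\,a$. The extra factor $c_\e=\e^{p_\e}\to0$ (Lemma~\ref{MN-lemma}(a)) absorbs the $N$-dependent constant $\mu$ once $\e<\e(N)$.

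\emph{Step 3 (the lower-order term in \eqref{interp1}).} This term contributes
$$
N^2R_k\e^{2\ell}\,\frac{\|u_\e'\|^2_{L^2(I_j)}}{|I_j|^{2(\ell-1)}}\le N^2R_k\,r\,c_\e^2\,\e^{2\ell-2}\,|I_j|^{3-2\ell}.
$$
By Lemma~\ref{upperlemma}, $|I_j|\ge\widetilde C\e^{1+\frac{2p_\e}{k-1}}$, so this is bounded by a constant times
$$
N^2\,\e^{2p_\e\left(1-\frac{2(\ell-1)}{k-1}\right)}\,|I_j| .
$$
Again because $2\ell<k+1$, the exponent of $\e$ here is $p_\e$ times a positive constant, so the factor is $c_\e^{\gamma}$ for some $\gamma>0$ and tends to $0$. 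Hence for $\e<\e(N)$ this term is at most $\frac14|I_j|$ after summing over $\ell$.

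\emph{Step 4 (summation).} Collecting the terms gives $\frac12|I_j|\le R_k k N^2\e^{1+p_\e}\,\e^{2k-1}\int_{I_j}|u_\e^{(k)}|^2$. The intervals $I_j$ are disjoint, so summing over $j$ and using the energy bound $S$ yields $|\mathcal A^r_\e\setminus\mathcal A^\ast_\e(N)|\le R_kk^2SN^2\e^{1+p_\e}$; the spare factor $k$ covers the factor $2$ and the sum over $\ell$.

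The main obstacle is Step 2. The naive interpolation \eqref{interp2-3} only produces the exponent $1-2p_\e\theta$, which is weaker than $1+p_\e$. Reaching $1+p_\e$ requires using the defining inequality of $\mathcal I^r_\e$ to control $b$ by $|I_j|$, and then exploiting $\frac\theta{1-\theta}>1$, which is exactly what the restriction to $\ell\le\ell(k)$ in \eqref{definast} guarantees.
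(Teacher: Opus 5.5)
Your proof is correct. Its skeleton is the scheme the paper takes from \cite[Lemma 13]{S-2}: Chebyshev on the set where some derivative of order $\ell\in\{2,\dots,\ell(k)\}$ exceeds $\frac1{N\e^\ell}$, then the localized interpolation \eqref{interp1}, then \eqref{irr} together with Lemma~\ref{upperlemma} to make the lower-order term absorbable, then summation against $S$.

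You deviate in one place, the tuned Young inequality of Step 2. It is valid but unnecessary, and the reason you give for it comes from a sign misprint in the paper. Dividing \eqref{interp2-1} by $\e^{\alpha_{\e,\ell}}$ gives the exponent $1-2\ell+2p_\e\frac{k-\ell}{k-1}$ in \eqref{interp2-2}--\eqref{interp2-3}, not $1-2\ell-2p_\e\frac{k-\ell}{k-1}$ as printed. Your own identity already shows this: taking $\lambda=1$ there gives
\[
N^2R_k\,\e^{2\ell}\|u_\e^{(k)}\|^{2(1-\theta)}_{L^2(I_j)}\|u_\e'\|^{2\theta}_{L^2(I_j)}\le N^2R_k\,\e^{1+2p_\e\theta}(a+b)=N^2R_k\,\e^{1+p_\e}c_\e^{2\theta-1}\,G_\e(u_\e;I_j).
\]
The condition $2\theta-1>0$ is exactly $2\ell<k+1$. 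So summing $G_\e(u_\e;I_j)\le S$ over $j$ already gives the bound, without invoking \eqref{irr} in this step.

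Your variant instead controls $b$ by $r|I_j|/\e$ and absorbs it into $|I_j|$. The price is an $N$-dependent constant $\mu$, which then has to be killed by the factor $c_\e$. In exchange you get a better power of $\e$ on the $k$-th order term, which the lemma does not need. Both versions rely on the same restriction $2\ell<k+1$, once in the interpolation term and once in the lower-order term of Step 3.
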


\begin{proof}  The proof is the same as that of \cite[Lemma 13]{S-2}, again up to substituting the exponent $\frac{4k-3}{4k-4}$ therein with ${1+\frac{p_\e}{k-1}}$.
\end{proof}

\begin{remark}\label{eqrbordo}\rm
In analogy to the definition of $(a_j^{\e,N},b_j^{\e,N})$ in \eqref{ab} for all $j\in\mathcal I_\e^\ast(N)$, we let $(a_{*,j}^{\e,N},b_{*,j}^{\e,N})$ denote the maximal subinterval  of $I_j$ such that $|u_\e^{(\ell)}|\leq\frac{1}{N\e^\ell}$ for all $\ell\in\{1,\dots, \ell(k)\}$ at the endpoints. Then we also have that the total sum of the length of the intervals obtained as one of the two intervals in $I_j\setminus (a_{*,j}^{\e,N},b_{*,j}^{\e,N})$ and satisfying the inequality in \eqref{irr} is bounded by $R_k k^2 S N^2\e^{1+p_\e}$ (as in \cite[Remark 14]{S-2}).
\end{remark}

This result will be used to estimate the contribution of $G_\e(u_\e, I(\e))$ when $I(\e)$ are maximal intervals of the form $I(\e)=[b_{*,j}^{\e,N}, a_{*,j'}^{\e,N}]$. In this case, if $z_\e\coloneq u_\e (a_{*,j'}^{\e,N})-u_\e (b_{*,j}^{\e,N})$,
setting
\begin{eqnarray}\label{Ire}
I_r^*(\e)= (I(\e)\cap ((I_j\setminus (a_{*,j}^{\e,N},b_{*,j}^{\e,N}))\cup I_{j'}\setminus (a_{*,j'}^{\e,N},b_{*,j'}^{\e,N}))
\cup (I(\e)\setminus \mathcal A_\e^{\ast}(N)),
\end{eqnarray}
so that $I(\e)\setminus I_r^*(\e)$ is a union of intervals $I$ such that $
\e^{1-2p_\e} \int_{I} |u'_\e|^2 dt\ge r\frac{|I|}\e$ holds, we have
\begin{eqnarray}\label{stimabi}\nonumber
&&\hskip-2cm G_\e(u_\e, I(\e))\ =\ \e^{1-2p}\int_{I(\e)\cap I_r^*(\e)} |u'_\e|^2dt \nonumber
+\int_{I(\e)\setminus I_r^*(\e)} |u'_\e|^2dt+ \e^{2k-1} \int_{I(\e)} |u_\e^{(k)}|^2\dx\\ \nonumber
&\ge& r\frac{|I(\e)\setminus I_r^*(\e)|}{\e}+ \e^{2k-1} \int_{I(\e)} |u_\e^{(k)}|^2\dx\\ \nonumber
&=& r\frac{|I(\e)|}{\e}+ \e^{2k-1} \int_{I(\e)} |u_\e^{(k)}|^2\dx
-r\frac{|I_r^*(\e)|}{\e}\\ \nonumber
&\ge&r\min_{T,v}\biggl\{T +\int_0^T |v^{(k)}|^2dt:  v(T)-v(0)= z_\e, |v'(0)|, |v'(T)|\le c_\e \\ &&\hskip 2cm |v^{(\ell)}(0)|, |v^{(\ell)}(T)|\le \frac1N\hbox{ for all }\ell\in\{2,\ldots, \ell(k)\}\Big\}
-r\frac{|I_r^*(\e)|}{\e},
\end{eqnarray}
where we have scaled the function $u_\e$ and taken into account the boundary conditions for $\ell\in\{2,\ldots, \ell(k)\}$. Note that the sum of the last terms over all such intervals $I(\e)$ is negligible, since it is at most of order $\e^p$  by Lemma \ref{lemmar} and Remark \ref{eqrbordo}.

\subsection{Energy densities}
In this section we have gathered some definitions of energy densities and proved some of their properties that are useful for the computation of the $\Gamma$-limit. These energy densities are now essentially the same as in \cite{S-2}, with $c_\e$ playing the role of an arbitrary sequence converging to $0$, so that most of the results are already contained there. We include these sections by completeness and because we use slightly different definitions.

\subsubsection{A lower bound with (large) linear growth at $0$.}
We now provide some lower estimates for the function 
\begin{eqnarray}\label{phieN}\nonumber
&&\phi_{\e,N}(z):=
\inf_{T,v}\biggl\{T +\int_0^T |v^{(k)}|^2dt:  v(T)-v(0)= z, |v'(0)|, |v'(T)|\le c_\e, \\
&&\hskip 4cm |v^{(\ell)}(0)|, |v^{(\ell)}(T)|\le \frac1N\hbox{ for all }\ell\in\{2,\ldots, \ell(k)\}\Big\}.
\end{eqnarray}
We first note that the minimum is achieved for $T$ strictly larger than a constant, if $|z|\ge\theta>0$.
Upon taking $\e$ small enough so that $c_\e\le \frac1N$, this claim is asserted by the following proposition.

\begin{proposition}\label{lobaTe}
Let $\theta>0$, $T_\e\to 0$ and $z_\e\ge \theta$ as $\e\to 0$. Then 
\begin{eqnarray*}\label{minTe}
&&\lim_{\e\to 0}\min_{v}\biggl\{\int_0^{T_\e} |v^{(k)}|^2dt:  v(T)-v(0)= z_\e,\\
&&\hskip 2cm |v^{(\ell)}(0)|, |v^{(\ell)}(T)|\le \frac1N\hbox{ for all }\ell\in\{1,\ldots, \ell(k)\}\}\Big\}=+\infty
\end{eqnarray*}
\end{proposition}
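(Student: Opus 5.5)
The plan is to rescale to the unit interval and then argue by contradiction with a compactness argument. The contradiction comes from the fact that a polynomial of degree at most $k-1$ cannot satisfy the homogeneous boundary conditions of orders $1,\dots,\ell(k)$ at both endpoints while having a nonzero jump.

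\emph{Rescaling.} Given an admissible $v$ on $(0,T_\e)$, set $w(s)\coloneq v(T_\e s)/z_\e$ for $s\in(0,1)$. Then
$$
\int_0^{T_\e}|v^{(k)}|^2\,dt=z_\e^2\,T_\e^{1-2k}\int_0^1|w^{(k)}|^2\,ds\ \ge\ \theta^2\,T_\e^{1-2k}\int_0^1|w^{(k)}|^2\,ds .
$$
Moreover $w(1)-w(0)=1$, and, since $T_\e\le1$ and $z_\e\ge\theta$,
$$
|w^{(\ell)}(0)|,\ |w^{(\ell)}(1)|\le \frac{T_\e^\ell}{\theta N}\le \frac{T_\e}{\theta N}\to0 \qquad\hbox{for } \ell\in\{1,\dots,\ell(k)\}.
$$
Since $T_\e^{1-2k}\to+\infty$, it suffices to show that
$$
\liminf_{\e\to0}\,\inf\Big\{\int_0^1|w^{(k)}|^2\,ds\ :\ w \hbox{ as above}\Big\}>0 .
$$
Dividing by $z_\e$ is what lets us handle possibly unbounded jumps.

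\emph{Compactness.} Suppose this liminf is $0$. Then there are admissible $w_\e$ with $\int_0^1|w_\e^{(k)}|^2\to0$, and we may normalize $w_\e(0)=0$. To get an $H^k(0,1)$ bound, write $w_\e'$ via Taylor's formula at $0$ up to order $k-2$. The coefficients $w_\e^{(\ell)}(0)$ with $\ell\le\ell(k)$ are infinitesimal. The remaining coefficients $w_\e^{(\ell)}(0)$ with $\ell(k)<\ell\le k-1$ need not be a priori bounded.

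I would handle this by splitting $w_\e=P_\e+R_\e$. Here $P_\e$ is the Taylor polynomial of degree $k-1$ at $0$, and $R_\e$ is the remainder, with $\|R_\e\|_{C^{k-1}}\le C\|w_\e^{(k)}\|_{L^2}\to0$. The polynomials $P_\e$ then satisfy, up to errors tending to $0$, the same boundary conditions and $P_\e(1)-P_\e(0)\to1$. These are finitely many linear conditions on a finite-dimensional space. If the coefficients were unbounded, normalizing them to unit size would give a limit polynomial $Q\neq0$ of degree $\le k-1$ with $Q(0)=0$, $Q(1)=0$ and $Q^{(\ell)}(0)=Q^{(\ell)}(1)=0$ for $1\le\ell\le\ell(k)$. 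Otherwise, a subsequence of $P_\e$ converges to a polynomial $P$ of degree $\le k-1$ with $P(1)-P(0)=1$ and $P^{(\ell)}(0)=P^{(\ell)}(1)=0$ for $1\le\ell\le\ell(k)$.

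\emph{The key algebraic step.} Set $q=Q'$ (respectively $q=P'$). This is a polynomial of degree $\le k-2$, i.e.\ it has $k-1$ coefficients, and it vanishes to order $\ell(k)$ at both $0$ and $1$. That is $2\ell(k)$ Hermite conditions. By the definition of $\ell(k)$ we have $\ell(k)=\lfloor k/2\rfloor$, so $2\ell(k)\ge k-1$. Hermite interpolation uniqueness then forces $q\equiv0$. In the unbounded case this gives $Q$ constant, hence $Q\equiv0$ since $Q(0)=0$, contradicting $Q\neq0$. In the bounded case it gives $P$ constant, contradicting $P(1)-P(0)=1$.

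This counting step is the heart of the argument, and it is exactly where the choice of $\ell(k)$ in \eqref{definast} enters. The main care needed is in the unbounded-coefficient case of the compactness step, where I would normalize the polynomial part as described rather than try to bound $\|w_\e\|_{H^k}$ directly.
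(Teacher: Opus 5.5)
Your proof is correct and follows the same route as the paper: rescale to the unit interval, argue by contradiction, pass to a limit polynomial of degree at most $k-1$ with homogeneous conditions on the derivatives of order $1,\dots,\ell(k)$ at both endpoints, and conclude that it must be constant, contradicting the unit jump. Your normalization by $z_\e$, the Taylor splitting with rescaling of possibly unbounded polynomial coefficients, and the explicit zero count $2\ell(k)\ge k-1$ make rigorous the steps that the paper summarizes as ``using iteratively Poincar\'e--Wirtinger's inequality'' and delegates to Solci's Remark~4.
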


\begin{proof}
The proof is achieved by contradiction: suppose otherwise, then if $\{v_\e\}$ is a sequence of minimizers, the functions $w_\e(t)\coloneq v_\e(T_\e t)$ belong to $H^{(k)}(0,1)$ and $\int_0^{1} |w_\e^{(k)}|^2dt\to 0$. Using iteratively Poincar\'e-Wirtinger's inequality, we prove that (after passing to a subsequence) their limit is a polynomial $P$ of degree at most $k-1$ with homogeneous conditions on $P^{(\ell)}$ on $0$ and $1$ for $\ell\in\{1,\ldots, \ell(k)\}$. This implies that $P$ is a constant,  in contradiction with the boundary condition $|P(1)-P(0)|\ge \theta$. For details we refer to \cite[Remark 4]{S-2}.
\end{proof}

We now estimate the value of $\phi_{\e,N}(z)$ for
$z$ close to $0$ by an affine function whose coefficient diverges as $\e\to 0$,
and involving the quantity
\begin{eqnarray}\label{mN}\nonumber
&&m(N):=
\inf_{T,v}\biggl\{T +\int_0^T |v^{(k)}|^2dt:  v(T)-v(0)= 1,  \\
&&\hskip 3cm |v^{(\ell)}(0)|, |v^{(\ell)}(T)|\le \frac1N\hbox{ for all }\ell\in\{1,\ldots, \ell(k)\}\Big\},
\end{eqnarray}
which is proved to be strictly positive in \cite[Remark 4]{S-2}. 
\begin{proposition} {Let $k\ge 3$.} For $|z|\le c_\e^3$ we have $\phi_{\e,N}(z)\ge  \frac{m(N)}{c_\e N}|z|$.
\end{proposition}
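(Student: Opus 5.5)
The plan is a direct rescaling argument: I map every competitor for $\phi_{\e,N}(z)$ to a competitor for $m(N)$ in \eqref{mN}, with a scaling factor chosen so that the small first-derivative bound $c_\e$ becomes the bound $\frac1N$. The case $z=0$ is trivial, since $\phi_{\e,N}\ge0$. So let $0<|z|\le c_\e^3$, and take any $T>0$ and $v$ admissible in \eqref{phieN}. Since the conditions in \eqref{mN} involve absolute values of derivatives and are invariant under $v\mapsto -v$, we may assume $z>0$.

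\textbf{Step 1 (rescaled competitor).} For $\lambda>0$ to be fixed, set $w(s)\coloneq v(\lambda s)/z$ on $(0,T/\lambda)$. Then $w(T/\lambda)-w(0)=1$ and $w^{(\ell)}(s)=\lambda^\ell v^{(\ell)}(\lambda s)/z$. Consequently:
\begin{itemize}
\item at the endpoints, $|w'|\le \lambda c_\e/z$;
\item at the endpoints, $|w^{(\ell)}|\le \lambda^\ell/(Nz)$ for $\ell\in\{2,\ldots,\ell(k)\}$;
\item $\int_0^{T/\lambda}|w^{(k)}|^2ds=\lambda^{2k-1}z^{-2}\int_0^T|v^{(k)}|^2dt$.
\end{itemize}

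\textbf{Step 2 (choice of $\lambda$ and checking the constraints).} I choose $\lambda\coloneq z/(Nc_\e)$. This gives exactly $|w'|\le\frac1N$ at the endpoints. For $2\le \ell\le \ell(k)$ we need $\lambda^\ell\le z$, which is equivalent to $z^{\ell-1}\le (Nc_\e)^\ell$. Since $z\le c_\e^3$, we have $z^{\ell-1}\le c_\e^{3\ell-3}$, and $3\ell-3\ge\ell$ for $\ell\ge2$. Together with $c_\e\le1\le N$, this gives the required bound, so $w$ is admissible in \eqref{mN}.

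\textbf{Step 3 (energy comparison).} Step 1 gives
$$
m(N)\le \frac T\lambda+\lambda^{2k-1}z^{-2}\int_0^T|v^{(k)}|^2dt .
$$
We want the right-hand side to be at most $\frac1\lambda\big(T+\int_0^T|v^{(k)}|^2dt\big)$, which requires $\lambda^{2k}\le z^2$, that is, $\lambda^k\le z$. This is equivalent to $z^{k-1}\le (Nc_\e)^k$. It holds because $z^{k-1}\le c_\e^{3k-3}\le c_\e^k$ when $3k-3\ge k$, which is true for $k\ge2$, in particular under the hypothesis $k\ge3$. Hence
$$
T+\int_0^T|v^{(k)}|^2dt\ \ge\ \lambda\, m(N)=\frac{m(N)}{c_\e N}\,z .
$$
Taking the infimum over all admissible $(T,v)$ gives $\phi_{\e,N}(z)\ge\frac{m(N)}{c_\e N}|z|$.

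The main point to get right is Step 2. One must check that a single $\lambda$ simultaneously normalizes the first-derivative bound and keeps the higher-order endpoint bounds and the energy scaling favorable. The hypothesis $|z|\le c_\e^3$ is what makes all the power comparisons go the right way. The positivity of $m(N)$ (from \cite[Remark 4]{S-2}) is only needed to make the conclusion nontrivial.
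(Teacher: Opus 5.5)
Your proof is correct and is essentially the paper's proof. You use the same rescaling $w(s)=v(\beta s)/z$ with $\beta=z/(c_\e N)$ (your $\lambda$), the same check that $(T/\beta,w)$ is admissible for $m(N)$, and the same energy comparison $z^2/\beta^{2k-1}\ge\beta$. Your reduction of that comparison to $z^{k-1}\le (Nc_\e)^k$ is the exact condition; the paper instead checks the stronger $z^{k-2}\le (Nc_\e)^k$, which is where $k\ge 3$ enters, so your computation shows this step also works for $k=2$.
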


\begin{proof} Let $\e,N,z$ be fixed and let $v,T$ be admissible test items for \eqref{phieN}. Without loss of generality $z>0$. We define
$\beta\coloneq \frac{z}{c_\e N}$, $T'\coloneq\frac{T}{\beta}$, and 
$w(s)\coloneq \frac{1}z v(\beta s)$.

We have 
\begin{equation}\label{dis-1}
\int_0^T|v^{(k)}|^2dt= \frac{z^2}{\beta^{2k-1}}\int_0^{T'}  |w^{(k)}|^2ds\ge \beta\int_0^{T'}  |w^{(k)}|^2ds
\end{equation}
since 
$z^{k-2}\le c_\e^{3(k-2)}\le c_\e^{k}\le N^{k} c_\e^{k}$.
Hence, we obtain
\begin{equation}\label{dis-2}
T +\int_0^T |v^{(k)}|^2dt\ge \beta\Big(T'+\int_0^{T'}  |w^{(k)}|^2ds\Big)
\end{equation}
Moreover, we have $w(T')-w(0)= 1$,  $|w'(0)|, |w'(T')|\le \frac1N$, and 
$$
|w^{(\ell)}(0)|, |w^{(\ell)}(T')|\le \frac{\beta^\ell}{Nz}=\frac{z^{\ell-1}}{c^\ell_\e N^{\ell+1}}, 
$$
which is less than $\frac1N$ since $z^{\ell-1}\le c_\e^{3(\ell-1)}\le c_\e^\ell$,
so that $(T',w)$ is an admissible test pair for \eqref{mN}, and we obtain the claim.
\end{proof}

\begin{proposition} Let $\theta\in (0,1)$ and $N$ be fixed. Then there exits $\e(\theta,N)$ such that we have $\phi_{\e,N}(z)\ge  m(N)\theta^{\frac1k-\frac34} |z|^{\frac34}$ for all $\e\le\e(\theta,N)$ and for all $z$ with $|z|\in [c_\e^3,\theta]$.
\end{proposition}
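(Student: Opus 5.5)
We argue as in the previous proposition, by rescaling an admissible pair for \eqref{phieN} into an admissible pair for \eqref{mN}. The difference is that for $|z|\ge c_\e^3$ the linear scaling $\beta=z/(c_\e N)$ no longer keeps the higher derivatives small. We therefore use a scaling adapted to the power $|z|^{3/4}$, or more precisely to $|z|^{1/k}$ corrected by the factor $\theta^{\frac1k-\frac34}$. Take $z>0$, an admissible pair $(T,v)$, and set $w(s)\coloneq\frac1z v(\beta s)$, $T'\coloneq T/\beta$, with $\beta>0$ to be chosen. As in \eqref{dis-1},
$$
T+\int_0^T|v^{(k)}|^2dt= \beta T'+\frac{z^2}{\beta^{2k-1}}\int_0^{T'}|w^{(k)}|^2ds\ \ge\ \min\Big\{\beta,\frac{z^2}{\beta^{2k-1}}\Big\}\Big(T'+\int_0^{T'}|w^{(k)}|^2ds\Big).
$$
The boundary conditions transform as $|w^{(\ell)}(0)|,|w^{(\ell)}(T')|\le \beta^\ell/(Nz)$ for $\ell\ge2$, and $|w'|\le \beta c_\e/z$ at the endpoints. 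Also $w(T')-w(0)=1$.

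We choose $\beta\coloneq \theta^{\frac1k-\frac34}z^{3/4}$. Then for $z\le\theta$:
\begin{itemize}
\item[(a)] $\beta\le z^{1/k}$, because $z^{\frac34-\frac1k}\le\theta^{\frac34-\frac1k}$. This needs $k\ge 2$, so that $\frac34-\frac1k\ge\frac14>0$; the case $k=2$ of Solci's setting should be checked separately. Hence $z^2/\beta^{2k-1}\ge \beta$, so the minimum above equals $\beta$.
\item[(b)] For $\ell\le \ell(k)$ we need $\beta^\ell\le z$, i.e.\ $\theta^{\ell(\frac1k-\frac34)}z^{\frac{3\ell}4}\le z$. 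Since $\theta<1$ and $\frac1k-\frac34<0$, the factor $\theta^{\ell(\frac1k-\frac34)}$ is larger than $1$. This is the delicate point. For $\ell=1$ the condition becomes $\theta^{\frac1k-\frac34}\le z^{1/4}$, which is false for small $z$.
\end{itemize}

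So the plan must be adjusted. We replace the pure power scaling by the interpolation between $\beta=z/(c_\e N)$, which works for small $z$, and $\beta\sim z^{1/k}$, which works for $z$ of order one. The first-derivative condition only requires $\beta\le z/(c_\e N)$. The conditions for $\ell\ge2$ require $\beta\le z^{1/\ell}$. Choosing $\beta=z^{3/4}\theta^{\frac1k-\frac34}$ satisfies both when $z\ge c_\e^3$ and $\e$ is small:
\begin{itemize}
\item $z^{3/4}\le z/(c_\e N)$ holds iff $z^{1/4}\ge c_\e N$. This follows from $z\ge c_\e^3$, since $c_\e^{3/4}\ge c_\e N$ for $\e\le\e(\theta,N)$, because $c_\e\to0$.
\item For $\ell\ge2$, $\beta^\ell\le z$ follows if $\beta\le \sqrt z$, i.e.\ $z^{1/4}\le \theta^{\frac34-\frac1k}$. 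This is the precise place where the restriction $z\le\theta$ and the factor $\theta$ enter. We will tune the constant in $\beta$, possibly by an extra harmless factor depending on $\theta$, so that both this and (a) hold on the whole range $[c_\e^3,\theta]$.
\end{itemize}

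Once the parameters are fixed, $(T',w)$ is admissible for \eqref{mN}. It follows that $T+\int|v^{(k)}|^2\ge \beta\, m(N)= m(N)\theta^{\frac1k-\frac34}z^{3/4}$, and taking the infimum over $(T,v)$ gives the claim.

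The main obstacle is the bookkeeping of exponents. We must make sure that one choice of $\beta$ simultaneously satisfies three things: the energy comparison $z^2\beta^{1-2k}\ge\beta$, the first-derivative constraint, which uses $z\ge c_\e^3$ and $\e$ small, and the higher-derivative constraints, which use $z\le\theta$. If the exponent $3/4$ turns out to be too tight for some $\ell\le\ell(k)$, we would first try to exploit the fact that only $\ell\le\ell(k)<(k+1)/2$ needs to be controlled, which gives room in the inequality $\beta^\ell\le z$.
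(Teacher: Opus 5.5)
Your setup is the paper's. You rescale $w(s)=v(\beta s)/z$ with $\beta=\theta^{\frac1k-\frac34}z^{3/4}$. You get $z^2\beta^{1-2k}\ge\beta$ from $z\le\theta$; your item (a) is fine for every $k\ge2$. You handle the first derivative via $|w'|\le\beta c_\e/z\le\theta^{\frac1k-\frac34}c_\e^{1/4}\le\frac1N$, using $z\ge c_\e^3$ and $\e$ small. Your item (b) first imposes $\beta\le z$ at $\ell=1$. That is wrong, because $v'$ is bounded by $c_\e$, not by $\frac1N$; you correct it afterwards, dropping the harmless factor $\theta^{\frac1k-\frac34}$.

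The gap is in the conditions for $\ell\in\{2,\dots,\ell(k)\}$, a set that is nonempty exactly when $k\ge4$. You correctly find that $|w''|\le\frac1N$ at the endpoints requires $\beta^2\le z$, i.e.\ $z\le\theta^{3-4/k}$, and this fails on $(\theta^{3-4/k},\theta]$. Your remedy, an extra $\theta$-dependent factor in $\beta$, is not harmless. The argument yields $\phi_{\e,N}(z)\ge\beta\,m(N)$, so any smaller $\beta$ gives a constant below the claimed $m(N)\theta^{\frac1k-\frac34}$. Restricting to $\ell\le\ell(k)$ gives no room either, since $\ell=2$ is the binding index. For $k\in\{2,3\}$ we have $\ell(k)=1$, so there are no such conditions and your argument is complete.

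This gap cannot be closed, because for $k\ge4$ the inequality as stated is false near $z=\theta$. The paper's proof has the same unjustified step: it asserts $|w^{(\ell)}|\le\frac1N$ at the endpoints. With this $\beta$ one only has $|w^{(\ell)}|\le\beta^\ell/(Nz)$, and at $z=\theta$ this bound equals $\theta^{\frac\ell k-1}/N>\frac1N$.

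For a counterexample, let $v(t)=\theta\big(3(t/T)^2-2(t/T)^3\big)$ on $(0,T)$. Then $v(T)-v(0)=\theta$, $v'$ vanishes at both endpoints, and $v^{(k)}\equiv0$ because $k\ge4$. At the endpoints $|v''|=6\theta/T^2$ and $|v'''|=12\theta/T^3$, and all higher derivatives vanish. Choosing $T=\max\{(6N\theta)^{1/2},(12N\theta)^{1/3}\}$ makes $(T,v)$ admissible in \eqref{phieN} for every $\e$. Hence $\phi_{\e,N}(\theta)\le\sqrt{6}\,(N\theta)^{1/3}$ whenever $N\theta\le1$. The claim at $z=\theta$ demands $\phi_{\e,N}(\theta)\ge m(N)\theta^{1/k}$ with $m(N)>0$ independent of $\theta$. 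Since $\frac13>\frac1k$, this fails for small $\theta$.

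What does hold is the $\theta$-free version, obtained with $\beta=|z|^{3/4}$:
for $|z|\le1$, $z^2\beta^{1-2k}=|z|^{2-\frac{3k}2}\beta\ge\beta$;
for $\ell\ge2$, $|w^{(\ell)}|\le|z|^{\frac{3\ell}4-1}/N\le\frac1N$;
for $\e$ small, $|w'|\le|z|^{-1/4}c_\e\le c_\e^{1/4}\le\frac1N$.
Hence $\phi_{\e,N}(z)\ge m(N)|z|^{3/4}\ge m(N)\theta^{-1/4}|z|$ for $c_\e^3\le|z|\le\theta$. Replacing the slope $m(N)\theta^{\frac1k-1}$ in Corollary~\ref{psinn} by $m(N)\theta^{-1/4}$ keeps a slope that diverges as $\theta\to0$. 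That is what the compactness argument uses; the Cantor-part estimate becomes $O(\theta^{1/4})$.
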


\begin{proof} The proof is analogous to that of the previous proposition, taking $\beta=\theta^{\frac1k-\frac34} |z|^{\frac34}$ as in the proof of \cite[Proposition 16]{S-2}. 
With this choice, \eqref{dis-1} holds since the inequality $\frac{z^2}{\beta^{2k}}\ge 1$ is guaranteed by condition $|z|\le \theta$, and we obtain  \eqref{dis-2}.

 Regarding the boundary values, we have $|w^{(\ell)}(0)|, |w^{(\ell)}(S)|\le\frac1N$, while the conditions on the first derivative give
$|w'(0)|,|w'(S)|\le\theta^{\frac1k-\frac34} |z|^{\frac34}c_\e$. This value is less than $\frac1N$  for $|z|\ge c_\e^3$ if $c_\e^{{13}/4}\le \frac1N\theta^{\frac34-\frac1k}$, so for these values of $\e$ we can use $(S,w)$ as a test pair for \eqref{mN}, and we obtain the claim.
 The equality $c_\e^{{13}/4}= \frac1N\theta^{\frac34-\frac1k}$ is the implicit definition of $\e(\theta,N)$.
\end{proof} 

\begin{proposition} Let $\theta\in (0,1)$ and $N$ be fixed. Then, upon possibly redefining $\e(\theta,N)$, we have $\phi_{\e,N}(z)\ge  m({\theta N}) |z|^{1/k}$ for all $\e\le\e(\theta,N)$ and for all $z$ with $|z|\ge \theta$, where $m({\theta N})$ is defined as in \eqref{mN} with $\theta N$ is the place of $N$. 
\end{proposition}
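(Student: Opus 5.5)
Fix $\e,N$ with $\e\le\e(\theta,N)$, take $z$ with $|z|\ge\theta$ (without loss of generality $z>0$), and let $(T,v)$ be any admissible pair for \eqref{phieN}. The plan is the same rescaling used in the two previous propositions, now with the $k$-homogeneous choice
$$
\beta\coloneq z^{1/k},\qquad T'\coloneq \frac T\beta,\qquad w(s)\coloneq \frac1z\, v(\beta s).
$$
This is the scaling under which length and transition energy are balanced, since $\int_0^T|v^{(k)}|^2dt= \frac{z^2}{\beta^{2k-1}}\int_0^{T'}|w^{(k)}|^2ds=\beta\int_0^{T'}|w^{(k)}|^2ds$ exactly. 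Hence
$$
T+\int_0^T|v^{(k)}|^2dt=z^{1/k}\Big(T'+\int_0^{T'}|w^{(k)}|^2ds\Big).
$$
The inequality then reduces to showing that $(T',w)$ is admissible for $m(\theta N)$, because this gives $\phi_{\e,N}(z)\ge m(\theta N)z^{1/k}$ after taking the infimum over $(T,v)$.

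The normalization $w(T')-w(0)=1$ holds by construction. For $\ell\in\{2,\ldots,\ell(k)\}$ we have
$$
|w^{(\ell)}(0)|,|w^{(\ell)}(T')|\le \frac{\beta^\ell}{zN}=\frac{z^{\ell/k-1}}{N}.
$$
Since $\ell\le \ell(k)<k$, the exponent $\ell/k-1$ is negative, so for $z\ge\theta$ (and $\theta<1$) this is at most $\theta^{\ell/k-1}/N\le \frac1{\theta N}$. The last inequality holds because $\ell/k-1\ge -1$, hence $\theta^{\ell/k-1}\le\theta^{-1}$. For $\ell=1$, the bound is $|w'|\le \beta c_\e/z=c_\e z^{1/k-1}\le c_\e\theta^{-1}$. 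This is at most $\frac1{\theta N}$ as soon as $c_\e\le \frac1N$, which is where $\e(\theta,N)$ may have to be redefined, by taking it smaller so that $c_\e\le 1/N$ also holds.

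The constraint on $\ell=1$ is needed in $m(\theta N)$ even though \eqref{phieN} only constrains $v'$ through $c_\e$. Once these checks are done, $(T',w)$ is a test pair for \eqref{mN} with $\theta N$ in place of $N$, and the claim follows.

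I do not expect a real obstacle. The only point requiring care is the bookkeeping of the exponents $\ell/k-1\in[-1,0)$, which is what forces $\theta N$ rather than $N$ in the constant. The argument also uses no upper bound on $z$; large jumps only make the rescaled boundary derivatives smaller. Positivity of $m(\theta N)$ is already known from \cite[Remark 4]{S-2}, so no further work is needed there.
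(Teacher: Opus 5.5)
Your proposal is correct and takes the same approach as the paper, whose proof is just the rescaling with $\beta=|z|^{1/k}$ carried over from the previous propositions. Your check of the boundary conditions supplies the details the paper leaves implicit: the exponents $\ell/k-1\in[-1,0)$ give the bound $1/(\theta N)$, and $c_\e\le 1/N$ handles $\ell=1$.
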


\begin{proof}
The proof is analogous to that of the previous propositions, taking $\beta=|z|^{1/k}$. 
\end{proof}

From the two previous propositions we have the following estimate.

\begin{corollary}\label{psinn} Let $\theta\in (0,1)$ and $N$ be fixed. Then we have $\phi_{\e,N}(z)\ge\psi_{\theta,N}(z)$ for all $\e\le\e(\theta,N)$ and for all $z$, where  $$\psi_{\theta,N}(z):= \min\{m(N)\theta^{\frac1k-1} |z|, m({\theta N})|z|^{1/k}\}.$$ 
\end{corollary}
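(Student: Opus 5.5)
The plan is to split the range of $z$ into the three regimes covered by the preceding propositions, and in each to compare the lower bound obtained there with the two branches of $\psi_{\theta,N}$. Since $\psi_{\theta,N}$ is a minimum, it suffices in each regime to check that $\phi_{\e,N}(z)$ dominates one of the two branches. By symmetry under $v\mapsto -v$ we may assume $z\ge0$; the case $z=0$ is trivial since $\psi_{\theta,N}(0)=0$ and $\phi_{\e,N}\ge0$.

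For $z\ge\theta$ we use the last proposition directly. It gives $\phi_{\e,N}(z)\ge m(\theta N)z^{1/k}\ge\psi_{\theta,N}(z)$ for $\e\le\e(\theta,N)$.

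For $z\in[c_\e^3,\theta]$ we use the bound $\phi_{\e,N}(z)\ge m(N)\theta^{\frac1k-\frac34}z^{3/4}$. We compare it with the linear branch: since $z\le\theta$, we have $(z/\theta)^{3/4}\ge z/\theta$. Hence
$$
\theta^{\frac1k-\frac34}z^{\frac34}=\theta^{\frac1k}(z/\theta)^{\frac34}\ge\theta^{\frac1k}\,z/\theta=\theta^{\frac1k-1}z ,
$$
so $\phi_{\e,N}(z)\ge m(N)\theta^{\frac1k-1}z\ge\psi_{\theta,N}(z)$.

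For $z\le c_\e^3$ we use the bound $\phi_{\e,N}(z)\ge\frac{m(N)}{c_\e N}z$. We need $\frac1{c_\e N}\ge\theta^{\frac1k-1}$, that is, $c_\e\le \theta^{1-\frac1k}/N$. This holds for $\e$ small because $c_\e\to0$ by Lemma \ref{MN-lemma}(a). We may therefore shrink $\e(\theta,N)$ so that this inequality holds, together with $c_\e\le 1/N$ and the thresholds of the two earlier propositions.

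The only real point of care is bookkeeping rather than estimation. We must make sure a single redefined $\e(\theta,N)$ works in all three regimes simultaneously. We must also check that the small-$z$ proposition, stated for $k\ge3$, covers the needed cases; for $k=2$ the intermediate conditions are void and the same scaling argument applies. Beyond that, each regime is immediate from the cited propositions.
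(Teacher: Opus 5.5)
Your proposal is correct and is the paper's argument: the paper obtains the corollary simply by combining the preceding regime-wise lower bounds, and your comparisons, $(\theta/|z|)^{1/4}\ge 1$ on $[c_\e^3,\theta]$ and $c_\e N\le\theta^{1-\frac1k}$ for $|z|\le c_\e^3$, are exactly the checks the paper leaves implicit. Your remark on $k=2$ is also right, since the scaling step for small $|z|$ only needs $|z|^{k-1}\le (c_\e N)^k$, and this follows from $|z|\le c_\e^3$ for every $k\ge2$ (for $k=2$ there are no higher-derivative conditions to check).
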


\subsubsection{Asymptotic optimal lower bound}\label{oplo}
We now describe the asymptotic behaviour of the functions 
\begin{eqnarray*}\label{minT}
&&\phi_N(z)=\min_{T,v}\biggl\{T +\int_0^T |v^{(k)}|^2dt:  v(T)-v(0)= z,\\
&&\hskip 2cm |v^{(\ell)}(0)|, |v^{(\ell)}(T)|\le \frac1N\hbox{ for all }\ell\in\{1,\ldots, k-1\}\Big\}.
\end{eqnarray*}

\begin{proposition}
The functions $\phi_N$ converge as $N\to+\infty$ increasingly to the function $m_k|z|^{1/k}$, 
where
\begin{eqnarray*}
&&m_k\coloneq\min_{T,v}\biggl\{T +\int_0^T |v^{(k)}|^2dt:  v(T)-v(0)= 1,\\
&&\hskip 2cm |v^{(\ell)}(0)|, |v^{(\ell)}(T)|=0\hbox{ for all }\ell\in\{1,\ldots, k-1\}\Big\}.
\end{eqnarray*}
Furthermore, the convergence is uniform on compact subsets of $\mathbb R\setminus\{0\}$.
\end{proposition}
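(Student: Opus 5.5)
We need to prove three things about the functions $\phi_N$: they increase in $N$, they converge to $m_k|z|^{1/k}$, and the convergence is uniform on compact subsets of $\mathbb R\setminus\{0\}$. The plan is to combine monotonicity, a scaling argument and a compactness argument for minimizers. Monotonicity is immediate: increasing $N$ shrinks the admissible class in the definition of $\phi_N$, so $\phi_N\le\phi_{N+1}$. Moreover, the homogeneous-condition problem is admissible for every $N$, so $\phi_N(z)\le m_k|z|^{1/k}$. To check this upper bound, take an optimal pair $(T,v)$ for $m_k$ and set $v_z(t)=z\,v(t/\beta)$ on $(0,\beta T)$ with $\beta=|z|^{1/k}$. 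Then
$$\int_0^{\beta T}|v_z^{(k)}|^2\,dt=z^2\beta^{1-2k}\int_0^T|v^{(k)}|^2\,dt=\beta\int_0^T|v^{(k)}|^2\,dt,$$
and $\beta T$ is the new length, so the cost is exactly $|z|^{1/k}m_k$. This also shows that $m_k|z|^{1/k}$ is the correct candidate limit. We also note that $m_k$ is attained. For fixed $T$, the minimization is a strictly convex quadratic problem whose minimizer is the polynomial of degree $2k-1$ with the prescribed boundary data. Its energy scales like $c\,T^{1-2k}$, so minimizing $T+cT^{1-2k}$ over $T>0$ gives a minimum at some $T_*>0$.

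For the lower bound, fix $z\neq0$ and suppose $\lim_N\phi_N(z)=L<m_k|z|^{1/k}$; the limit exists by monotonicity. Take nearly optimal pairs $(T_N,v_N)$ for $\phi_N(z)$. Since $T_N\le L+1$, the lengths are bounded. They are also bounded away from $0$: this follows from the argument of Proposition \ref{lobaTe}, because the boundary conditions on all derivatives up to order $k-1$ contain those up to order $\ell(k)$. Pass to a subsequence with $T_N\to T>0$. Rescale to $w_N(s)=v_N(T_Ns)-v_N(0)$ on $(0,1)$. The $w_N$ have bounded $H^k$ seminorm, and their boundary derivatives up to order $k-1$ are bounded (they are $T_N^\ell$ times quantities $\le 1/N$). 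Hence, by Poincar\'e-type estimates, the $w_N$ are bounded in $H^k(0,1)$, and they converge weakly in $H^k$ and strongly in $C^{k-1}$ to some $w$. In the limit, $w(1)-w(0)=z$ and $w^{(\ell)}(0)=w^{(\ell)}(1)=0$ for $1\le\ell\le k-1$, since the constraints $|\cdot|\le 1/N$ pass to the limit as equalities. By lower semicontinuity of the $L^2$ norm of the $k$-th derivative, $L\ge T+\int_0^T|\tilde w^{(k)}|^2\,dt$, where $\tilde w$ is the rescaling of $w$ back to $(0,T)$. This right-hand side is at least $m_k|z|^{1/k}$ by the scaling above, a contradiction. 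So the pointwise limit is $m_k|z|^{1/k}$.

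For uniform convergence on a compact $K\subset\mathbb R\setminus\{0\}$, the limit is continuous and the sequence is increasing, so Dini's theorem will give uniformity provided each $\phi_N$ is lower semicontinuous on $K$. This lower semicontinuity follows from the same compactness argument: for $z_j\to z$, nearly optimal pairs for $\phi_N(z_j)$ with lengths bounded above and below converge to an admissible pair for $\phi_N(z)$, the constraints being closed. (Alternatively, one can run the contradiction argument directly along $z_N\to z$.) The main obstacle is the uniform positive lower bound on $T_N$, which excludes degenerate short transitions. I expect to obtain it by invoking Proposition \ref{lobaTe}, using $|z|\ge\theta>0$ on $K$.
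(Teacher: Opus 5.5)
Your proof is correct, but it takes a different route from the paper's. The paper simply asserts that $m_k|z|^{1/k}$ is the pointwise increasing limit of $\phi_N(z)$, and the proof itself addresses only uniformity, via Ascoli--Arzel\`a. Its rescaling $w(s)=\frac{z'}{z}v(\frac{z}{z'}s)$ on an interval of length $\frac{z'}{z}T$ multiplies the endpoint derivatives by $(z/z')^{\ell-1}\le 1$, so it preserves the constraints and gives $\phi_N(z')\le\frac{z'}{z}\phi_N(z)$ for $0<z<z'$. Together with monotonicity in $|z|$ and $\phi_N\le m_k|z|^{1/k}$, this makes the $\phi_N$ equi-Lipschitz on every $[z_1,z_2]\subset(0,+\infty)$.

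You instead prove the pointwise identification, by compactness of near-minimizers. The non-degeneracy $T_N\not\to 0$ comes from Proposition~\ref{lobaTe}, applied with $N=1$ since the admissible classes shrink as $N$ grows (cf.\ Remark~\ref{lobaTe-2}). You then get uniformity from the semicontinuous version of Dini's theorem, again via compactness.

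The paper's scaling is quantitative and compactness-free, but it leaves unjustified exactly the step you prove, which is where the lower bound on lengths is really needed. Your argument is qualitative and runs on a single compactness mechanism. The two combine well: the paper's inequality already shows each $\phi_N$ is continuous. So with your pointwise identification the classical Dini theorem suffices, and your separate semicontinuity argument is no longer needed.
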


\begin{proof} Since 
\begin{eqnarray*}
&&m_k|z|^{1/k}=\min_{T,v}\biggl\{T +\int_0^T |v^{(k)}|^2dt:  v(T)-v(0)= z,\\
&&\hskip 2cm |v^{(\ell)}(0)|, |v^{(\ell)}(T)|=0\hbox{ for all }\ell\in\{1,\ldots, k-1\}\Big\},
\end{eqnarray*}
which is the pointwise increasing limit of $\phi_N(z)$, it is enough to prove that we may apply Ascoli-Arzel\`a's Theorem. Since the functions $\phi_N$ are even and monotone for $z>0$, they are equibounded on bounded sets. Hence, we only need to show that $\phi_N$ are equicontinuous.
To this end, it suffices to consider $\phi_N$ on a subinterval $[z_1,z_2]\subset (0,+\infty)$. 
Let $z,z'\in [z_1,z_2]$, and we may suppose that $z'>z$. Let $(T,v)$ be a test pair for $\phi_N(z)$. We set $S\coloneq\frac{z'}{z} T$ and $w(s) \coloneq \frac{z'}z v\big(\frac{z}{z'} s\big)$, so that $w(S)-w(0)= z'$, $|w^{(\ell)}(0)|=|\frac{z}{z'}|^{\ell-1} |v^{(\ell)}(0)|\le  |v^{(\ell)}(0)|$ and similarly for $|w^{(\ell)}(T)|$. Using $(S,w)$ as a test pair for $\phi_N(z')$, we have
\begin{eqnarray*}
\phi_N(z')\le S+\int_0^S |w^{(k)}|^2ds = \frac{z'}{z} T+ \Big(\frac{z}{z'}\Big)^{2k-3}\int_0^T |v^{(k)}|^2dt
\le \frac{z'}{z} T+\int_0^T |v^{(k)}|^2dt,\end{eqnarray*}
which shows that $\phi_N(z')\le  \frac{z'}{z}\phi_N(z)$, and hence  
$$|\phi_N(z')-\phi_N(z)|=\phi_N(z')-\phi_N(z)\le  \Big(\frac{z'}{z}-1\Big)\phi_N(z)\le m_k\frac{|z_2|^{1/k}}{|z_1|}|z'-z|,$$
proving the equi-Lipschitz continuity of $\phi_N$ on $[z_1,z_2]$. 
\end{proof}

\begin{remark}\rm
If $(\overline T,\overline v)$ denotes the minimal pair for $m_k$, then the minimal pair $(T_z,v_z)$ for the problem with the boundary condition $v(T)-v(0)=z>0$ is given by
$$
T_z\coloneq z^{1/k} \overline T,\qquad v_z(t)\coloneq z\,\overline v( {t}{z^{-\frac1k}}).
$$
\end{remark}

\begin{remark}\label{lobaTe-2}\rm
Note that Proposition \ref{lobaTe} holds a fortiori also for the problems involving boundary conditions on all derivatives up to $k$.
\end{remark}

\section{One-dimensional $\Gamma$-limit and compactness}\label{ga-sec}
We can now proceed with the proof of the convergence theorem in dimension one. The analysis in the previous sections following \cite{S-2} for the jump part of the limit energy will be combined with arguments in \cite{M-N} that allow one to treat separately the bulk part of the limit energy.

\begin{theorem}[compactness]\label{compactness}
Let $\{u_\e\}$ be a family of functions in $H^k(0,1)$ such that $F_\e(u_\e)\le S<+\infty$. 
Then, up to extraction of a subsequence and additions of constants, $\{u_\e\}$ converges in measure to a function $u\in SBV(0,1)$.
\end{theorem}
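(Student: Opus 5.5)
Since $F_\e(u_\e)\le S$, Proposition \ref{sub-prop} (with $\eta=\frac12$) gives $G_\e(u_\e)\le 2S$. The idea is to replace $u_\e$ by a piecewise-$H^1$ function with finitely many jumps whose total count and $H^1$ energy are bounded, and then apply a standard SBV compactness theorem.

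First, every maximal interval $J$ of the complement of $\mathcal A_\e$ (where $|u'_\e|\ge c_\e/\e$) costs at least $|J|/\e$ in the truncated term. By \eqref{menoeps} these intervals have total length at most $2S\e$. Their number is not directly controlled, so I would group them using the intervals $(a_j^{\e,N},b_j^{\e,N})$ of $\mathcal A_\e(N)$. The complement of $\mathcal A_\e(N)$ is a union of maximal intervals $I(\e)=[b^{\e,N}_j,a^{\e,N}_{j'}]$.

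\begin{itemize}
\item Intervals with $|I(\e)|\gg \e^{1+p_\e/(k-1)}$: by Proposition \ref{stimatausigma1} most of $I(\e)$ lies above threshold, so $G_\e(u_\e,I(\e))\ge \tfrac12|I(\e)|/\e$. Since $|I(\e)|\gtrsim\e$ for the relevant ones, each costs at least a fixed positive amount.
\item Shorter intervals: Lemmas \ref{lowerlemma}, \ref{osck-lemma} and \ref{upperlemma} bound how many intervals of $\mathcal I_\e$ they meet, and Lemma \ref{upperlemma} forces each interval of $\mathcal I^r_\e$ to have length at least $\widetilde C\e^{1+2p_\e/(k-1)}$.
\end{itemize}

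This is the point where I would instead use the energy densities. By \eqref{pre-fa} and \eqref{stimabi} together with Corollary \ref{psinn}, each $I(\e)$ on which $u_\e$ has an increment $z$ costs at least roughly $\psi_{\theta,N}(z)$, up to negligible errors. Since $\psi_{\theta,N}(z)\ge c\min\{|z|,|z|^{1/k}\}$, the intervals with $|z|\ge\delta$ are at most $C(\delta)S$ in number, and the sum of $\min\{|z|,|z|^{1/k}\}$ over all of them is bounded by $CS$.

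Next, define $v_\e$ equal to $u_\e$ on $\mathcal A_\e(N)$ and piecewise constant on each $I(\e)$, jumping at its midpoint by the increment $z$. On $\mathcal A_\e(N)$ we have $\e^{1-2p_\e}\int|u_\e'|^2\le 2S$, which is not a bound on $\int|u'_\e|^2$. So, as in \cite{M-N}, I would use the logarithmic term directly. Splitting $\mathcal A_\e$ into the sets where $\e|\log\e||u'_\e|^2\le 1$ and where it is $>1$:
\begin{itemize}
\item on the first set, $\log(1+x)\ge x/2$ gives $\int|u'_\e|^2\le 2S$;
\item on the second set, the energy bound gives measure $\lesssim S\e$, while $|u'_\e|\le c_\e/\e$ there.
\end{itemize}
I would therefore also treat the second set as a jump set: further subdivide, count the resulting jumps via the cost $\ge \log 2/(\e|\log\e|)\cdot$measure, and keep the increments summable in the $\min\{|z|,|z|^{1/k}\}$ sense. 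The modified $v_\e$ is then in $SBV$ with $\int|v_\e'|^2\le C$, $\sum\min\{|z|,|z|^{1/k}\}\le C$, and $v_\e-u_\e\to0$ in measure, since they differ on a set of measure $O(\e)$.

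The main obstacle is this intermediate regime, where $|u'_\e|$ is between $(\e|\log\e|)^{-1/2}$ and $c_\e/\e$ and the truncated energy $G_\e$ gives no $H^1$ control. I would first try to show, by Jensen on $\log$ and Lemma \ref{MN-lemma}(b), that the total variation of $u_\e$ there is small. If that fails, I would accept these points as small jumps of summable size.

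Finally, I would apply Ambrosio's $SBV$ compactness theorem (or its $GSBV$ version), with $\theta(z)=\min\{|z|,|z|^{1/k}\}$ superlinear at $0$ in the sense $\theta(z)/|z|\to\infty$. Because truncations are not needed in 1D once the number of large jumps is bounded, a subsequence of $v_\e-C_\e$ converges a.e. to some $u\in SBV(0,1)$, and hence $u_\e-C_\e\to u$ in measure.
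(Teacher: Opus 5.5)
\paragraph{Gap: the Cantor part is not excluded.} Your final step does not work. The function $\min\{|z|,|z|^{1/k}\}$ equals $|z|$ for $|z|<1$, so it is linear at $0$, not superlinear, and Ambrosio's $SBV$ compactness theorem does not apply. Your bounds ($\int|v_\e'|^2\le C$, equibounded jump sizes, $\sum|z|\le C$) give only weak$^*$ compactness in $BV(0,1)$, and this does not rule out a Cantor part in the limit.

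No better fixed jump function is available from the estimates. For fixed $\theta$, the density $\psi_{\theta,N}$ is itself linear near $0$, with slope $m(N)\theta^{\frac1k-1}$. For fixed $\e$, the cost of a tiny jump really is linear: the affine profile $v(t)=c_\e t$ on $(0,|z|/c_\e)$ is admissible in \eqref{phieN} and gives $\phi_{\e,N}(z)\le |z|/c_\e$.

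The missing idea is the one the paper uses. The modified functions $v_\e$ and the bound $\Psi_{\theta,N}(v_\e)\le(2+\frac1r)S+o(1)$ do not depend on $\theta$, while the slope at $0$ of $\psi_{\theta,N}$ blows up as $\theta\to0$. Pass to the limit through the lower semicontinuous envelope of $\Psi_{\theta,N}$, using the Bouchitt\'e--Braides--Buttazzo relaxation formula, in which the Cantor part is weighted by $m(N)\theta^{\frac1k-1}$. This gives $\|D_Cv\|(0,1)\le\frac{S}{m(N)}(2+\frac1r)\theta^{1-\frac1k}$ for every $\theta\in(0,1)$, hence $D_Cv=0$. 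Equivalently, you could use a closure theorem with $\e$-dependent jump densities increasing to a superlinear one. Some such limiting argument is indispensable.

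\paragraph{Two smaller points.} First, the intermediate regime you single out as the main obstacle is harmless.
\begin{itemize}
\item On $\mathcal A_\e\cap\{\e|\log\e||u_\e'|^2>1\}$ the energy bound gives measure at most $S\e|\log\e|/\log 2$, not $O(\e)$.
\item Since $|u_\e'|<c_\e/\e$ there, the total variation of $u_\e$ on that set is at most $CSc_\e|\log\e|$, which tends to $0$ by Lemma \ref{MN-lemma}(a).
\item So you can simply set $v_\e'=0$ there, at a uniformly $o(1)$ cost in $L^\infty$.
\end{itemize}
This is exactly the paper's step with the set $D_\e$. Its threshold $\sqrt{c_\e}/(\e\sqrt{|\log\e|})$ is chosen only to obtain the constant $1-\eta$, which is needed later for the sharp lower bound, not for compactness.

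Second, \eqref{stimabi} is proved for the intervals $[b^{\e,N}_{*,j},a^{\e,N}_{*,j'}]$ built from $\mathcal A^*_\e(N)$, not for the components of $(0,1)\setminus\mathcal A_\e(N)$. On short components of the latter, the prefactor in \eqref{pre-fa} is not controlled. The jumps of $v_\e$ should therefore be placed on the $*$-intervals, as the paper does in its compactness proof.
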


\begin{proof}
The scheme of the proof is as follows. We consider $N\in\mathbb N$ fixed. Note that in the following argument $N$ will not play any role, and we can take $N=1$. 
With fixed $\theta\in(0,1)$, we define the functional 
\begin{equation}
\Psi_{\theta,N}(v)\coloneq\int_0^1 |v'|^2dt
+ \sum_{t\in S(v)}\psi_{\theta,N}(v(t+)-v(t-))
\end{equation}
for $v\in SBV(0,1)$, where $\psi_{\theta,N}$ is as in Corollary \ref{psinn}.
We construct functions $v_\e^\theta$, such that $u_\e-v_\e^\theta$ tends to $0$ in measure and 
$\Psi_{\theta,N}(v_\e^\theta)$ is equibounded independently of $\theta$. From the compactness theorem in $BV$, we deduce that $\{v_\e^\theta\}$ converges weakly in $BV$ to a function $v^\theta$. We then prove that $\{v^\theta\}$ tends in $L^1(0,1)$ as $\theta\to 0$  to a function $v_0\in SBV$.

\smallskip For the sake of notation, we simply write $v_\e$ in place of $v_\e^\theta$.
The construction of $v_\e$ combines arguments from \cite{S-2} (for the jump part) and \cite{M-N} (for the bulk part). It consists of two steps:

\smallskip
(i) (bulk part) let 
$$
D_\e:= \Big\{t\in (0,1): |u_\e'|^2>\tfrac{\sqrt{c_\e}}{\e\sqrt{|\log\e|}}\Big\},
$$
and let $D^*_\e$ be the intersection of $D_\e$ with the
complement of the (finite) union of the intervals $[b_{*,j}^{\e,N}, a_{*,j'}^{\e,N}]$ as introduced in Section \ref{estt-2}. We  then consider the functions $\widetilde u_\e$ defined 
by $\widetilde u_\e(0)\coloneq u_\e(0)$ and 
$\widetilde u'_\e\coloneq  u'_\e \chi_{(0,1)\setminus D^*_\e}$; that is, we modify $u_\e$ by making it constant on $D^*_\e$. Note that  $\int_0^1\frac1{\e|\log\e|}\log (1+\e|\log\e||u'_\e|^2)dt\le S<+\infty$, then $\lim\limits_{\e\to 0}\frac{c_\e}\e|D_\e|=0$, which implies that the modification is asymptotically negligible in $L^1(0,1)$.

\smallskip
(ii) (jump part) intervals $[b_{*,j}^{\e,N}, a_{*,j'}^{\e,N}]$, as defined in Section \ref{estt-2}, correspond to jumps for $v_\e$. This function is defined as the constant $\widetilde u_\e(b_{*,j}^{\e,N})$ on the interval $[b_{*,j}^{\e,N}, a_{*,j'}^{\e,N})$, so that it has a jump of size $z_\e= u_\e(a_{*,j'}^{\e,N})-u_\e(b_{*,j}^{\e,N})$ at the point $a_{*,j'}^{\e,N}$. In the complement of the union of such intervals, we set $v_\e\coloneq\widetilde u_\e$

\bigskip
By \eqref{stimabi}, we have 
$$
\sum_{t\in S(v_\e)}\psi_{\theta,N} (v_\e(t+)-v_\e(t-))
\le \frac1r\sum_{j} G_\e(u_\e, (b_{*,j}^{\e,N}, a_{*,j'}^{\e,N}))
+C\e^p\le \frac1r F_\e(u_\e)+o(1),
$$
taking into account claim (b) of Lemma \ref{MN-lemma} and the definition of $r$ in Lemma \ref{upperlemma}. 
The error term comes from the sum of the rests in \eqref{stimabi}, taking into account
Lemma \ref{lemmar} and Remark \eqref{eqrbordo}. The constant $C$ is given by Lemma \ref{lemmar}, and depends on $N$ but not on $\theta$.

As for the bulk contribution we note that, with fixed $\eta\in(0,1)$ there exists $\e(\eta)$ such that for  $\e\le \e(\eta)$ we have 
 $$
(1-\eta)|z|^2\le\frac1{\e|\log\e|}\log (1+\e|\log\e||z|^2)
 $$ 
for all $|z|^2\le \tfrac{\sqrt{c_\e}}{\e\sqrt{|\log\e|}}$.
Taking $\eta=\frac12$ this implies that
$$
\int_0^1|v'_\e|^2 dt \le 2\int_0^1\frac1{\e|\log\e|}\log (1+\e|\log\e||v'_\e|^2)dt
\le 2 F_\e(u_\e).
$$
From these two estimates we obtain that $\Psi_{\theta, N} (v_\e)\le (2+\frac1r)S +o(1)$,
We note that, since $\psi_{\theta, N}(z)$ diverges as $|z|$ tends to $+\infty$, this estimate implies that the maximal jump size of $v_\e$ is equibounded, and there exists a positive constant $\kappa$ such that $\psi_{\theta, N}(v_\e(t+)-v_\e(t-))\ge \kappa |v_\e(t+)-v_\e(t-)|$. This implies that, up to the addition of constants, $v_\e$ is precompact in $BV(0,1)$-weak$^*$, so that we can assume that $v_\e\to v$. 
If $\overline\Psi_{\theta, N}$ denotes the lower-semicontinuous envelope of $\Psi_{\theta, N}$ with respect to the weak$^*$ topology in $BV(0,1)$, then we have
$$
 \Big(2+\frac1r\Big)S\ge \liminf_{\e\to 0} \Psi_{\theta, N}(v_\e)\ge \overline\Psi_{\theta, N}(v).
$$
Since $\{v_\e-u_\e\}$ tends to $0$ in measure, this implies that $\{u_\e\}$ tends to $v$ in measure.

We now show that, by the arbitrariness of $\theta\in(0,1)$, the limit is in $SBV(0,1)$.
By \cite{BBB} we have 
$$
\Psi_{\theta, N}(v)=\int_{(0,1)} f_{\theta, N} (v')dt+ m(N)\theta^{\frac1k-1}\|D_Cv\|(0,1)
+\sum_{t\in S(v)} \psi_{\theta, N}(v(t+)-v(t-)),
$$
where $D_Cv$ denotes the Cantor part of the derivative of $v$, and 
$$
f_{\theta, N}(z)=(|z|^2\wedge m(N)\theta^{\frac1k-1}|z|)^{**}.
$$
This implies that
$$
\|D_Cv\|(0,1)\le  \frac{S}{m(N)}\Big(2+\frac1r\Big)\theta^{1-\frac1k}.
$$
If $\|D_Cv\|(0,1)>0$ this would give a contradiction for $\theta$ small enough.
\end{proof}

\begin{remark}[A sub-optimal lower bound]\label{som-1}\rm
From the proof of the previous theorem, localized in any open subinterval $I$ of $(0,1)$ and optimized in the estimates involving $\eta$ and $r$, we have
$$
\Big(\Gamma\hbox{-}\liminf_{\e\to 0} F_\e\Big)(v, I)\ge \int_{I} f^r_{\theta, N} (v')dt+ r m(N)\theta^{\frac1k-1}\|D_Cv\|(I)
+ r\sum_{t\in S(v)\cap I} \psi_{\theta, N}(v(t+)-v(t-)),
$$ 
where $f^r_{\theta, N}(z)=(|z|^2\wedge r\, m(N)\theta^{\frac1k-1}|z|)^{**}$.
Optimizing locally such estimates, we deduce that 
\begin{eqnarray*}
\Big(\Gamma\hbox{-}\liminf_{\e\to 0} F_\e\Big)(v, I)&\ge& \int_{I} \sup_{\theta\in(0,1), N}f_{\theta, N} (v')dt+r\sum_{t\in S(v)\cap I} \sup_{\theta\in(0,1), N}\psi_{\theta, N}(v(t+)-v(t-))\\
&=& \int_{I} |v'|^2dt+ r\sum_{t\in S(v)\cap I} m_k |v(t+)-v(t-)|^{1/k}
\end{eqnarray*}
(this is formalized in the so-called sup-of-measure lemma, see e.g.~\cite{Handbook}).
This estimate is not optimal by the presence of $r$, which is strictly less than $1$, but already shows that the domain of the $\Gamma$-limit is (contained in) the set of $SBV(0,1)$ functions whose approximate derivative is in $L^2(0,1)$ and such that $\sum\limits_{t\in S(v)\cap I}  |v(t+)-v(t-)|^{1/k}<+\infty$. \end{remark}

A finer use 
of the energy densities in Section \ref{oplo} will be necessary to prove the sharp lower bound in the following proposition.

\begin{proposition}[lower bound]
Let $\{u_\e\}$ be a family of functions in $H^k(0,1)$ such that $F_\e(u_\e)\le S<+\infty$, and let $u\in SBV(0,1)$ be such that $u_\e\to u$ in measure. Then we have
\begin{equation}
\liminf_{\e\to 0} F_\e(u_\e)\ge \int_{(0,1)}|u'|^2dt +m_k\sum_{t\in S(u)} |u(t+)-u(t-)|^{1/k}.
\end{equation} 
\end{proposition}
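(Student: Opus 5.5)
We prove the inequality by separating the jump and bulk contributions and localizing them, then recombining with the sup-of-measures lemma as in Remark~\ref{som-1}. The difference from Remark~\ref{som-1} is that near jump points we use the sharp densities $\phi_N$ of Section~\ref{oplo}, so the factor $r$ no longer appears. Since $\liminf_\e F_\e(u_\e)$ only increases under restriction to disjoint subintervals, it suffices to prove two estimates. First, for every finite set $\{t_1,\dots,t_M\}\subset S(u)$ and $\delta>0$ small, with $J_i=(t_i-\delta,t_i+\delta)$ pairwise disjoint, we show
\[
\liminf_{\e\to0}F_\e(u_\e;J_i)\ge m_k|u(t_i+)-u(t_i-)|^{1/k}-o_N(1).
\]
Second, on each open interval $I$ we show $\liminf_{\e\to0}F_\e(u_\e;I)\ge\int_I|u'|^2dt$. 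We then sum over $(0,1)\setminus\bigcup_i\overline J_i$ and the $J_i$, let $\delta\to0$, $M\to\infty$ and $N\to\infty$, and use the convergence $\phi_N\nearrow m_k|z|^{1/k}$.

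\emph{Bulk part.} Here we follow \cite{M-N}. For $\eta>0$ we have $\frac1{\e|\log\e|}\log(1+\e|\log\e|z^2)\ge(1-\eta)z^2$ on $\{|z|^2\le \sqrt{c_\e}/(\e\sqrt{|\log\e|})\}$. Truncate $u'_\e$ on $D_\e$ as in the proof of Theorem~\ref{compactness}, and also remove the jump intervals $[b^{\e,N}_{*,j},a^{\e,N}_{*,j'}]$. This produces functions $v_\e$ with $v_\e-u_\e\to0$ in measure and
\[
\int_I|v_\e'|^2\le(1-\eta)^{-1}F_\e(u_\e;I).
\]
The compactness argument gives $v_\e\rightharpoonup u$ weakly$^*$ in $BV$, and $v'_\e$ is bounded in $L^2$. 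The absolutely continuous parts of $v'_\e$ converge weakly in $L^2$ to $u'$ on $I$, because the jumps of $v_\e$ converge to atoms, which are singular. Lower semicontinuity of the $L^2$ norm then yields the bulk estimate.

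\emph{Jump part.} Fix $t_i$ and put $z=u(t_i+)-u(t_i-)\neq0$. Since $u_\e\to u$ in measure, there are points $s_\e\in(t_i-\delta,t_i)$ and $s'_\e\in(t_i,t_i+\delta)$ where $u_\e$ approximates the values $u(t_i\mp)$ up to an error $\omega(\delta)$. The set $(0,1)\setminus\mathcal A_\e$ has measure at most $\e S$ by \eqref{menoeps}. By Lemma~\ref{lowerlemma}, Lemma~\ref{osck-lemma} and Proposition~\ref{stimatausigma1}, we can choose $s_\e,s'_\e$ inside intervals $(a^{\e,N}_j,b^{\e,N}_j)$, or move them to the nearest endpoints at the cost of a negligible oscillation. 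At these endpoints all derivatives satisfy $|u^{(\ell)}_\e|\le 1/(N\e^\ell)$ for $\ell\le k-1$.

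Decompose $[s_\e,s'_\e]$ into the maximal intervals $I^1(\e),\dots,I^{p}(\e)$ of $(0,1)\setminus\mathcal A_\e(N)$ together with the pieces of $\mathcal A_\e(N)$. On the latter we have $|u'_\e|<c_\e/\e$ and $\int\e^{1-2p_\e}|u'_\e|^2\le S$, so the total oscillation there is at most
\[
\sqrt{2\delta}\,\e^{p_\e-1/2}\sqrt S\to0 .
\]
The jump $z$ is therefore essentially carried by the intervals $I^h(\e)$, which by Proposition~\ref{lobaTe} have length of order $\e$ when their jumps $z^h_\e$ are bounded away from $0$. Apply \eqref{pre-fa} on each $I^h(\e)$, using that the factor $|I\setminus\mathcal A_\e|/|I|\to1$ by Proposition~\ref{stimatausigma1}. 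This gives
\[
G_\e(u_\e;I^h(\e))\ge(1-o(1))\,\phi_N(z^h_\e).
\]

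Finally, $\phi_N$ is subadditive up to error: it is bounded below by $\psi_{\theta,N}$, which is concave, and in the limit by $m_k|\cdot|^{1/k}$, which is subadditive. Hence $\sum_h\phi_N(z^h_\e)\ge\phi_N(\sum_h z^h_\e)-o_N(1)$, using the concavity of the bound $\psi_{\theta,N}$ for the many small jumps, whose total contribution is controlled via Corollary~\ref{psinn}. Passing to the limit then gives $\phi_N(z)-o(1)$. The constant $1-\eta$ coming from Proposition~\ref{sub-prop} is removed by letting $\eta\to0$.

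\emph{Main obstacle.} The hardest step is the last one: showing that the surface energies of the possibly many intervals $I^h(\e)$ inside $J_i$ recombine into $\phi_N$ of the total jump without losing a constant factor. The factor $r$ of Remark~\ref{som-1} came from exactly this issue. We would first try to restrict to the intervals of $(0,1)\setminus\mathcal A_\e(N)$, where \eqref{pre-fa} carries the factor $1-o(1)$ rather than $r$. We would then show that the collection $\mathcal I^r_\e\setminus\mathcal I_\e(N)$, which alone forces the $r$-estimate \eqref{stimabi}, contributes only $O(\e^{p_\e})$ by Lemma~\ref{lemmar}. Its jumps are small and are absorbed using $\psi_{\theta,N}\ge\kappa|z|$ together with the monotonicity of $\phi_N$ in $N$.
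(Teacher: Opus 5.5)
There is a genuine gap in the jump part, exactly at the step you flag as the main obstacle. Your localization at finitely many jump points is a legitimate alternative to the paper's global construction of $v_\e$ followed by relaxation, but the estimate you feed into it is not justified.

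\textbf{The short intervals.} You apply \eqref{pre-fa} on every maximal interval $I^h(\e)$ of $(0,1)\setminus\mathcal A_\e(N)$. You then claim the prefactor $|I^h(\e)\setminus\mathcal A_\e|/|I^h(\e)|\to1$ by Proposition~\ref{stimatausigma1}. That proposition requires $|I^h(\e)|\gg\e^{1+\frac{p_\e}{k-1}}$. Proposition~\ref{lobaTe} guarantees this only when $|z^h_\e|\ge\theta$. An interval with a small jump and length not $\gg\e^{1+\frac{p_\e}{k-1}}$ can lie almost entirely below threshold, so \eqref{pre-fa} gives essentially nothing there. Many such intervals may together carry a non-negligible part of the jump at $t_i$.

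For these intervals the only available bound comes from the finer $\mathcal I^*_\e(N)$ subdivision and \eqref{stimabi}. That gives $r\,\phi_{\e,N}(z)\ge r\,\psi_{\theta,N}(z)$, minus the term $r|I^*_r(\e)|/\e$. The boundary conditions there go only up to order $\ell(k)$, so you get $\phi_{\e,N}$, not $\phi_N$.

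Your way around this misreads Lemma~\ref{lemmar}. That lemma bounds $|\mathcal A^r_\e\setminus\mathcal A^*_\e(N)|$, which is what makes the sum of the subtracted terms $O(\e^{p_\e})$. It says nothing about $\mathcal I^r_\e\setminus\mathcal I_\e(N)$. It also does not remove the factor $r<1$ in front of the minimum in \eqref{stimabi}.

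\textbf{The missing idea.} Do not eliminate $r$; make it irrelevant, as the paper does. Use the sharp bound $(1-\eta)^3m_k|z|^{1/k}$ only on complement intervals with $|z|\ge\theta$. These have length $\gtrsim\e$, and $\phi_N\to m_k|\cdot|^{1/k}$ uniformly on $\{\theta\le|z|\le M\}$. On all other intervals accept $r\,\psi_{\theta,N}$, and note that for $|z|\le2\theta$
\[
r\,\psi_{\theta,N}(z)\ge r\,m(\theta N)\,(2\theta)^{\frac1k-1}|z| .
\]
This is a linear bound whose slope diverges as $\theta\to0$ with $\theta N\to\infty$. A fixed $\kappa>0$, as in the compactness proof, is not enough. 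Concavity of $\min\{A|z|,m_k|z|^{1/k}\}$ then recombines the pieces. Taking the supremum over $(\theta,N)$ gives $m_k|z|^{1/k}$. The paper does this globally through $\widetilde\Psi_{\theta,N}$.

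\textbf{Two further problems.}
\begin{itemize}
\item Your bulk step needs the same mechanism. The claim that the jumps of $v_\e$ ``converge to atoms, which are singular'' is not automatic: purely atomic measures can converge weakly$^*$ to diffuse ones. What excludes this is the diverging slope of $\psi_{\theta,N}$ at $0$, as in the exclusion of the Cantor part in Theorem~\ref{compactness}, or equivalently the relaxed bulk density $(|z|^2\wedge A|z|)^{**}\nearrow|z|^2$.
\item Your oscillation bound $\sqrt{2\delta S}\,\e^{p_\e-1/2}$ diverges, because $p_\e\to0$. Use instead $\int|u'_\e|^2\le 2S$ off $D_\e$ together with $\frac{c_\e}{\e}|D_\e|\to0$. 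This gives $O(\sqrt\delta)+o(1)$.
\end{itemize}
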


\begin{proof} We fix $N\in\mathbb N$. We refine the construction of functions $v_\e$ in the proof of Theorem \ref{compactness} using, where convenient, the intervals $[b_{j}^{\e,N}, a_{j'}^{\e,N}]$ as defined in Section \ref{estt-1} instead of the intervals $[b_{*,j}^{\e,N}, a_{*,j'}^{\e,N}]$ as defined in Section \ref{estt-2}. This will yield functions $v_\e$ with a jump in $a_{j'}^{\e,N}$ of size $z_\e$, whose contribution is estimated by $\phi_N(z_\e)$. Since $\phi_N(z)\to m_k |z|^{1/k}$ on compact subsets of $\mathbb R\setminus \{0\}$ as $N\to +\infty$, we again introduce a threshold $\theta\in(0,1)$ and consider different intervals $[b_{j}^{\e,N}, a_{j'}^{\e,N}]$ with $|u_\e(a_{j'}^{\e,N})-u_\e(b_{j}^{\e,N}) 
|\le\theta$. We then consider the intervals of the form $[b_{*,j_*}^{\e,N}, a_{*,j_*'}^{\e,N}]$ contained in $[b_{j}^{\e,N}, a_{j'}^{\e,N}]$, where we use the construction in the proof of Theorem \ref{compactness}. 

\bigskip
Since the superposition of the two constructions is not easy to follow, we introduce some notation similar to that of \cite{S-2}. For every $\e>0$ we denote by $\tau_n=\tau^\e_n$ ,$\sigma_n=\sigma^\e_n$, for $n\in\{1,\ldots, K_\e\}$, the endpoints of the disjoint intervals whose union is (the closure of) $(0,1)\setminus \bigcup_{j\in \mathcal I_\e(N)}\mathcal (a_{j}^{\e,N},b_{j}^{\e,N})$; 
that is 
$$
(0,1)\cap\bigcup_{n=1}^{K_\e}[\tau^\e_n,\sigma^\e_n]=(0,1)\setminus \bigcup_{j\in \mathcal I_\e(N)}\mathcal (a_{j}^{\e,N},b_{j}^{\e,N}).
$$
We parameterize the intervals so that they are increasing with $n$; that is, $\sigma^\e_n<\tau^\e_{n+1}$.
Since $\mathcal I_\e(N)$ is a finite set, then either $\tau^\e_n=0$ or $\tau^\e_n=b_{j}^{\e,N}$ for some $j\in  \mathcal I_\e(N)$, and correspondingly $\sigma^\e_n=\min\{a_{j'}^{\e,N}\in (b_{j}^{\e,N},1)\}$, with the convention that $\sigma^\e_n=1$ if there is no such $a_{j'}^{\e,N}$.
Note that some $[\tau^\e_n,\sigma^\e_n]$ may contain intervals $I^\e_j$ with $j\in \mathcal I_\e\setminus \mathcal I_\e(N)$.

\begin{figure}[h!]
\centerline{\includegraphics[width=0.7\textwidth]{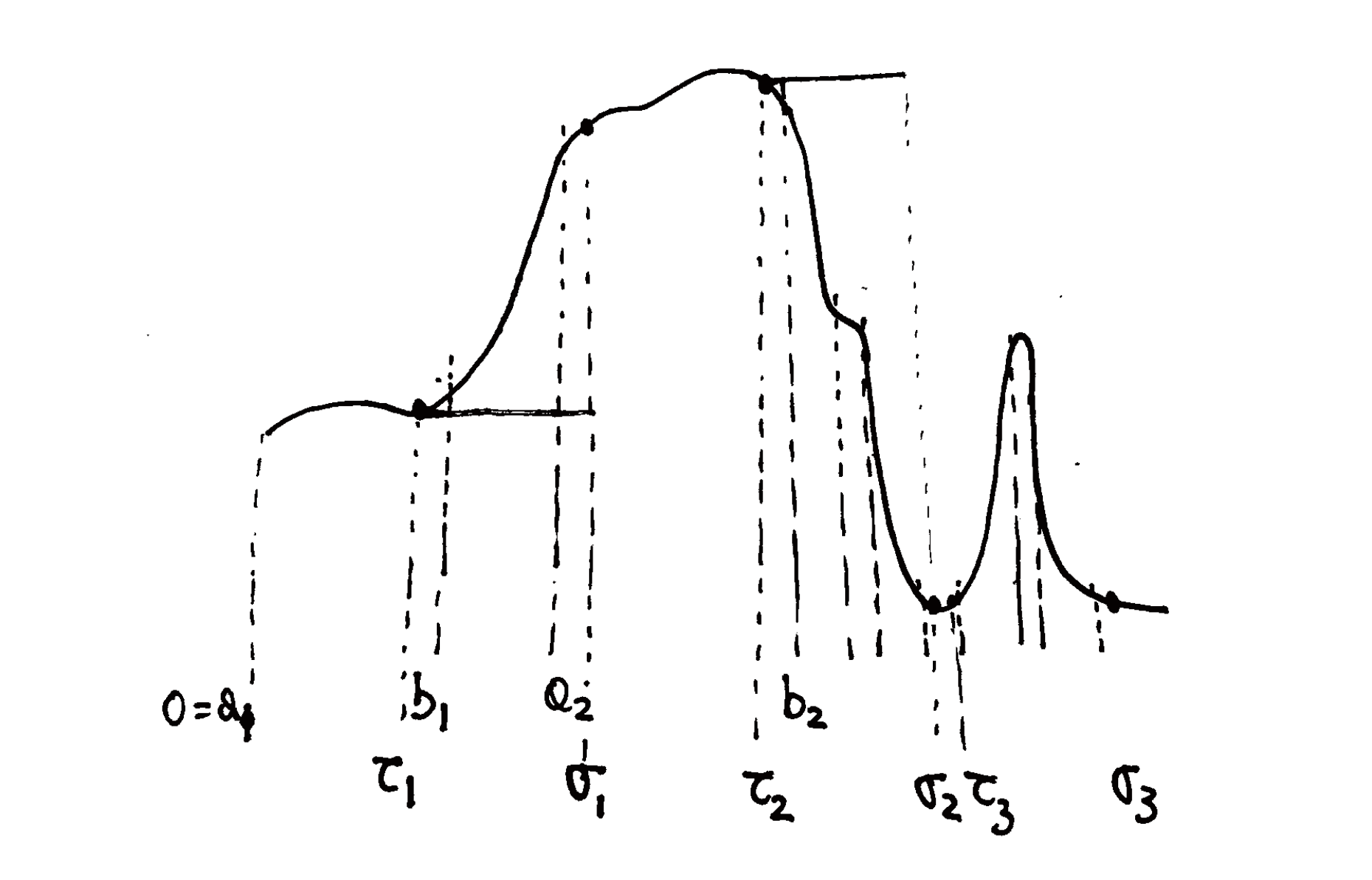}}
\caption{A function $u_\e$ and the related $[\tau,\sigma]$ subdivision}
\label{Fig1}
\end{figure}
Fig.~\ref{Fig1} gives a pictorial representation of a function $u_\e$ and the corresponding subdivision
in intervals $[\tau_n,\sigma_n]=[\tau^\e_n,\sigma^\e_n]$, and of the first step in the construction of $v_\e$; namely, if $|u_\e(\sigma^\e_n)-u_\e(\tau^\e_n)|\ge \theta$, we define 
\begin{equation}
v_\e(t)\coloneq u_\e(\tau^\e_n) \hbox{ for }t\in [\tau^\e_n,\sigma^\e_n),
\end{equation}
as represented in the figure.
Note that, taking $\e>0$ small enough so that the claim of Proposition \ref{sub-prop} is satisfied,
and noting that the pre-factor in \eqref{pre-fa} is also uniformly larger than $(1-\eta)$ for $\e$ small enough, we have
\begin{equation}
F_\e(u_\e, (\tau^\e_n,\sigma^\e_n))\ge (1-\eta)^3 m_k|u_\e(\sigma^\e_j)-u_\e(\tau^\e_j)|^{1/k}.
\end{equation}

The next step is to treat intervals $[\tau^\e_n,\sigma^\e_n]$ such that $|u_\e(\sigma^\e_n)-u_\e(\tau^\e_n)|< \theta$. If $\sigma^\e_n-\tau^\e_n>> \e^{1+\frac{p}{k-1}}$ then we can use Proposition
\ref{stimatausigma1} in \eqref{pre-fa}, to deduce that for $\e$ small enough we have
\begin{equation}
F_\e(u_\e, (\tau^\e_n,\sigma^\e_n))\ge (1-\eta)^2 r\, \psi_{\theta, N} (u_\e(\sigma^\e_n)-u_\e(\tau^\e_n)).
\end{equation}
In this case, again we can substitute $u_\e$ with a constant in this interval, producing a jump point in 
$\sigma^\e_n$.

\begin{figure}[h!]
\centerline{\includegraphics[width=0.4\textwidth]{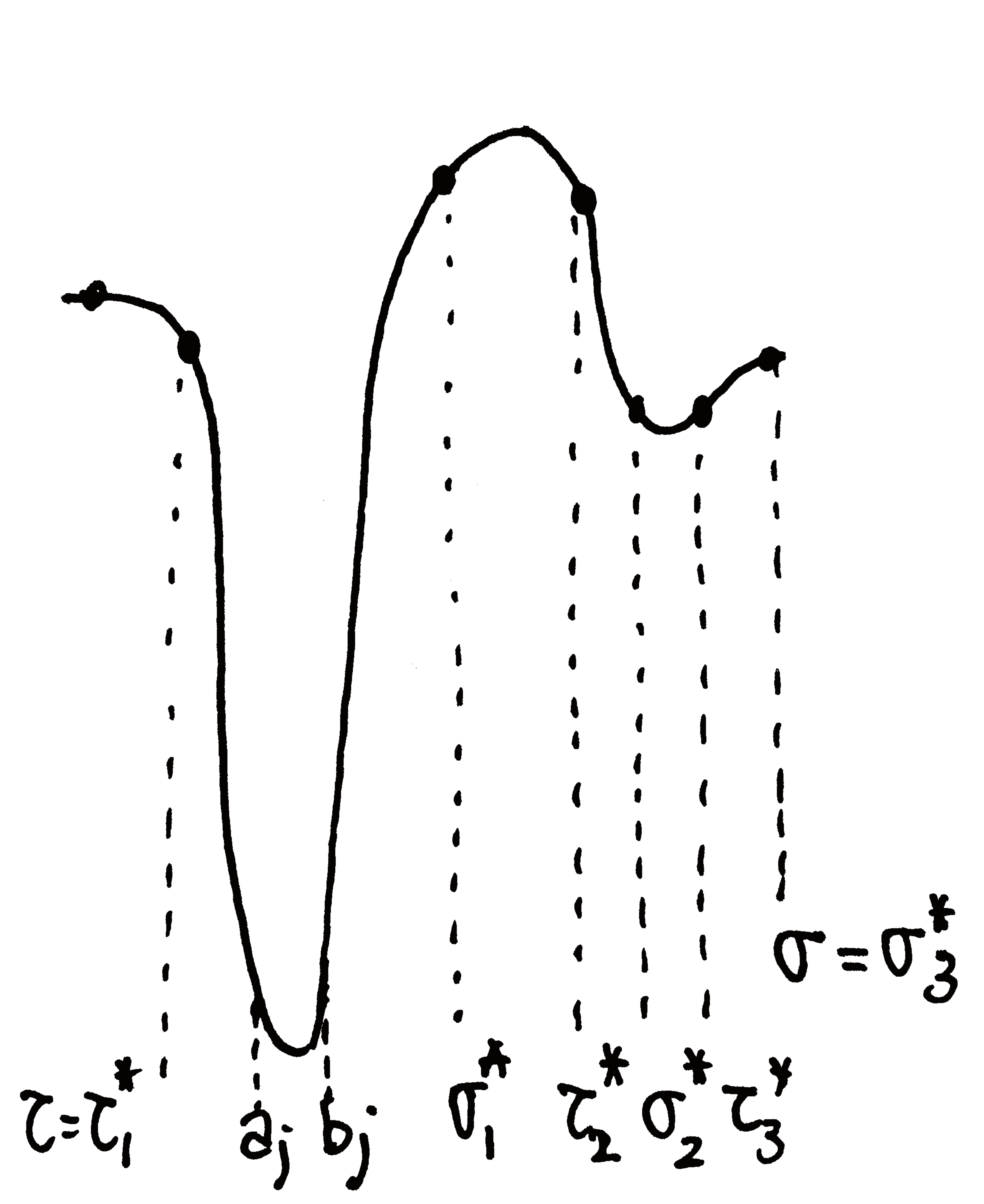}}
\caption{A function $u_\e$ and the related $[\tau^*,\sigma^*]$ subdivision}
\label{Fig2}
\end{figure}
We finally consider the case $\sigma^\e_n-\tau^\e_n\le C\e$ for some constant $C$,
for which the finer subdivision $\mathcal I_\e^*(N)$ must be used as in the previous theorem.
In Fig.~\ref{Fig2} we represent the graph of a function $u_\e$ in an interval $[\tau,\sigma]=[\tau^\e_n,\sigma^\e_n]$ as above. 
 Dropping the dependence on $\e$ and $n$, we denote by $\tau^*_h$ ,$\sigma^*_h$, for $h\in\{1,\ldots, H^n_\e\}$, the endpoints of the disjoint intervals whose union is $[\tau,\sigma]\setminus \bigcup_{j\in \mathcal I^*_\e(N)}\mathcal (a_{*,j}^{\e,N},b_{*,j}^{\e,N})$. 
 Again, we parameterize such intervals increasingly with $h$. 
  In the figure, we have highlighted that some interval $I_j=(a_j,b_j)$ with $j\not \in\mathcal I_\e^*(N)$ may be strictly contained in some $[\tau^*_n,\sigma^*_n]$.
  
  We can estimate
\begin{eqnarray*}
\Big|u_\e(\sigma^\e_n)-u_\e(\tau^\e_n)-\sum_{h=1}^{H^n_\e}(u_\e(\sigma^*_h)-u_\e(\tau^*_h))\Big|
&\le&\Big|\sum_{h=1}^{H^n_\e-1} \int_{(\sigma^*_h, \tau^*_{h+1})}u'_\e dt\Big|\\
&\le&\frac{c_\e}\e(\sigma^\e_n-\tau^\e_n)\le C c_\e.
\end{eqnarray*} 
In particular, for $\e$ small enough we have 
$$
\Big|\sum_{h=1}^{H^n_\e}(u_\e(\sigma^*_h)-u_\e(\tau^*_h))\Big|\le 2\theta.
$$
In the interval $[\tau^\e_n,\sigma^\e_n)$ we define the function $v_\e$ by setting $v_\e(\tau^\e_n)\coloneq u_\e(\tau^\e_n)$, and 
$$
v'_\e(t)\coloneq\begin{cases} 0 &\hbox{ if } \displaystyle t\in \bigcup_{h=1}^{H^n_\e}[\tau^*_h, \sigma^*_h]\\
u'_\e(t) &\hbox{ otherwise.}\end{cases}
$$
In this way, we have
$$
u_\e(\sigma^\e_n+)-v_\e(\sigma^\e_n+)= \sum_{h=1}^{H^n_\e}(u_\e(\sigma^*_h)-u_\e(\tau^*_h)).
$$
 Note that
 \begin{eqnarray}\label{esti-2}\nonumber
\sum_{h=1}^{H^n_\e} F_\e(u_\e,(\tau^*_h, \sigma^*_h))&\ge &(1-\eta)\sum_{h=1}^{H^n_\e} \Big(\psi_{\theta,N} (u_\e(\sigma^*_h)-u_\e(\tau^*_h))-r\frac{|I_{r,h}^*(\e)|}{\e}\Big)
\\
&\ge&  (1-\eta)\psi_{\theta,N}\Big(\sum_{h=1}^{H^n_\e}(u_\e(\sigma^*_h)-u_\e(\tau^*_h))\Big)-
\sum_{h=1}^{H^n_\e}\frac{|I_{r,h}^*(\e)|}{\e},
\end{eqnarray}
where $I_{r,h}^*(\e)$ is given as in \eqref{Ire} with $[\tau^*_h,\sigma^*_h]$ in the place of $[b_{*,j}^{\e,N}, a_{*,j'}^{\e,N}]$.

We finally define $v_\e$ as in Step (i) of the previous proof in the complementary of the sets considered above.

Recalling that the last term in \eqref{esti-2} is negligible by Lemma \ref{lemmar} and Remark \eqref{eqrbordo},
even after summation over all intervals (see Section \ref{estt-2}),
we can rewrite our estimates as
\begin{equation}
\liminf_{\e\to 0} F_\e(u_\e)\ge (1-\eta)^3\liminf_{\e\to 0} \widetilde\Psi_{\theta, N}(v_\e),
\end{equation}
where
$$
 \widetilde\Psi_{\theta, N}(v)\coloneq\int_{(0,1)}|v'|^2dt+\sum_{t\in S(v)}  \widetilde\psi_{\theta, N}(v_\e(t+)-v_\e(t-)),
$$
and
\begin{equation}
 \widetilde\psi_{\theta, N}(z)=\begin{cases}
 r \min\{m(N)\theta^{\frac1k-1} |z|, m({\theta N})|z|^{1/k}\} &\hbox{ if }|z|\le 2\theta\\
 m_k|z|^{1/k} &\hbox{ if }|z|> 2\theta.
 \end{cases}
\end{equation}

We can take $\theta\in (0,\frac12)$ and estimate
$$
 \widetilde\psi_{\theta, N}(z)\ge \min\{r\,m(\theta N)|2\theta|^{1-\frac1k}|z|, m_k|z|^{1/k}\}
$$
We can then proceed as in Remark \ref{som-1}, optimizing the estimates for $\theta\in (0,\frac12)$ and
$N\in\mathbb N$, and noting that
$$
\sup_{\theta, N}\min\{r\,m(\theta N)|2\theta|^{1-\frac1k}|z|, m_k|z|^{1/k}\}= m_k|z|^{1/k},
$$
obtaining the claim.
\end{proof}

It remains to prove the upper bound. This will be done by a density argument, whose first step is an upper bound for a dense family of functions as in the following proposition.

\begin{proposition}\label{ubck} Let $u$ be a piecewise-$C^k$ function such that there exists $\eta>0$ such that for all $t\in S(u)$, $u$ is constant on $(t-\eta,t)$ and $(t, t+\eta)$. Let $(\overline T,\overline v)$ denote the minimal pair for $m_k$, let $z_t\coloneq  u(t^+)-u(t^-)$, and let $\e$ be such that  $|z_t|^{1/k}\overline T\e\le\eta$. If $u_\e$ is defined as
\begin{equation}\label{glisuco}
u_\e(x)\coloneq \begin{cases} u(t^-)+z_t\,\displaystyle\overline v\Big( \frac{x-t}{\e|z_t|^{1/k}}\Big) &\hbox{ if $t\in S(u)$ and  $x\in (t,t+|z_t|^{1/k}\overline T\e)$}\\
u(x) &\hbox{ otherwise,}
\end{cases}\end{equation}
then $u\in H^k(0,1)$, $u_\e\to u$ in $L^1(0,1)$, and 
\begin{equation}\label{ub-eq}
\limsup_{\e\to 0^+} F_\e(u_\e)\le \int_0^1|u'|^2dx +m_k\sum_{t\in S(u)} |u(t^+)-u(t^-)|^{1/k}.
\end{equation}
\end{proposition}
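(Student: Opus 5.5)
The plan is a direct computation, splitting $F_\e(u_\e)$ into the contribution of the transition layers $J_t\coloneq(t,t+|z_t|^{1/k}\overline T\e)$, $t\in S(u)$, and of their complement. First I check regularity (the statement's ``$u\in H^k$'' should read $u_\e\in H^k(0,1)$). Since $u$ is constant on $(t-\eta,t)$ and $(t,t+\eta)$, and $\overline v(0)=0$, $\overline v(\overline T)=1$, $\overline v^{(\ell)}(0)=\overline v^{(\ell)}(\overline T)=0$ for $1\le\ell\le k-1$, the function $u_\e$ coincides with $u(t^-)$ at the left end of $J_t$ and with $u(t^+)$ at the right end. There all its derivatives up to order $k-1$ vanish, matching the constant values of $u$ nearby. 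Because $|z_t|^{1/k}\overline T\e\le\eta$, the layer $J_t$ lies inside $(t,t+\eta)$. Hence $u_\e$ is $C^{k-1}$ with piecewise $H^k$ pieces, so $u_\e\in H^k(0,1)$. Since $S(u)$ is finite (piecewise $C^k$), $u_\e\ne u$ only on a set of measure $O(\e)$ where both are bounded, and $u_\e\to u$ in $L^1$ follows.

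Outside $\bigcup_t J_t$ we have $u_\e=u$, which is piecewise $C^k$ on a fixed compact set of intervals. The singular term there is $\e^{2k-1}\int|u^{(k)}|^2\le C\e^{2k-1}\to0$. For the logarithmic term I use $\log(1+s)\le s$, giving $\frac1{\e|\log\e|}\log(1+\e|\log\e||u'|^2)\le|u'|^2$, so this part is at most $\int_0^1|u'|^2$.

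On each layer $J_t$, set $\lambda=\e|z_t|^{1/k}$. The logarithmic integrand is bounded by its value at the maximum slope $|u_\e'|\le |z_t|\|\overline v'\|_\infty/\lambda=C_t/\e$ (note $\overline v\in C^{2k-1}$, being a polynomial). It is therefore at most
\[
\frac{\log(1+C_t^2|\log\e|/\e)}{\e|\log\e|}\le \frac{1+o(1)}{\e},
\]
because $\log(1+C|\log\e|/\e)=|\log\e|+\log|\log\e|+O(1)=|\log\e|(1+o(1))$. Multiplying by $|J_t|=\e|z_t|^{1/k}\overline T$ gives at most $(1+o(1))|z_t|^{1/k}\overline T$. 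For the higher-order term, the scaling is
\[
\e^{2k-1}\int_{J_t}|u_\e^{(k)}|^2dx=\e^{2k-1}z_t^2\lambda^{1-2k}\int_0^{\overline T}|\overline v^{(k)}|^2ds=|z_t|^{1/k}\int_0^{\overline T}|\overline v^{(k)}|^2ds,
\]
since $|z_t|^{2-(2k-1)/k}=|z_t|^{1/k}$. Adding the two contributions, the layer $J_t$ contributes at most $(1+o(1))|z_t|^{1/k}m_k$.

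Summing over the finitely many $t\in S(u)$ and letting $\e\to0$ yields \eqref{ub-eq}. The only slightly delicate point is the logarithmic asymptotics on the layers. This is essentially the computation in Lemma \ref{MN-lemma}(b), except that here the slope is of order $1/\e$ rather than $c_\e/\e$, and it still gives the ratio $\to1$. I expect no real obstacle beyond this.
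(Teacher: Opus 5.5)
Your proposal is correct and follows essentially the same route as the paper. It uses the same rescaling on each transition layer, the same bound of the logarithmic integrand by its value at the maximal slope of order $1/\e$ (with $\frac{1}{|\log\e|}\log(1+C|\log\e|/\e)\to1$), and the vanishing of the $k$-th order term away from the layers. The only minor differences are that you bound the bulk part directly via $\log(1+s)\le s$ rather than by dominated convergence, and that you explicitly check $u_\e\in H^k(0,1)$ (rightly reading the statement's ``$u\in H^k$'' as a typo), which the paper leaves implicit.
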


\begin{proof} The convergence $u_\e\to u$ in $L^1(0,1)$ is by construction, since the functions $u_\e$ are bounded in $L^1$ and are equal to $u$ except for a set of vanishing measure. 

For all $t\in S(u)$ we have 
\begin{eqnarray*}
F_\e(u_\e,(t,t+|z_t|^{1/k}\overline T\e))&=& 
\int_0^{|z_t|^{1/k}\overline T\e} \frac{1}{\e|\log\e|} \log\Big(1+\e|\log\e|\Big|\frac{z_t}{\e|z_t|^{1/k}}\Big|^2|v'\Big( \frac{x}{\e|z_t|^{1/k}}\Big)|^2\Big)\dx\\
&&+ \e^{2k-1} \int_0^{|z_t|^{1/k}\overline T\e}\frac{1}{\e^{2k}} \Big|\overline v^{(k)}\Big( \frac{x-t}{\e|z_t|^{1/k}}\Big)\Big|^2\dx\\
&=& 
\int_0^{\overline T} \frac{\e |z_t|^{1/k}}{\e|\log\e|} \log\Big(1+\e|\log\e|\Big|\frac{z_t}{\e|z_t|^{1/k}}\Big|^2|v'(y)|^2\Big)dy\\
&&+ \e^{2k-1}\e |z_t|^{1/k} \int_0^{\overline T}\Big|\frac{z_t}{\e|z_t|^{1/k}}\Big|^{2k} |\overline v^{(k)}(y)|^2 dy\\
&=& 
|z_t|^{1/k}\Big(\int_0^{\overline T} \frac{1}{|\log\e|} \log\Big(1+\frac{|\log\e|}\e\Big|\frac{z_t}{|z_t|^{1/k}}\Big|^2|v'(y)|^2\Big)dy\\
&&\hskip6cm+ \int_0^{\overline T} |\overline v^{(k)}(y)|^2 dy\Big)\\
&\le& 
|z_t|^{1/k}\Big(\overline T \frac{1}{|\log\e|} \log\Big(1+\frac{|\log\e|}\e C\Big)+ \int_0^{\overline T} |\overline v^{(k)}(y)|^2 dy\Big),
\end{eqnarray*}
where $C\coloneq \big|\frac{z_t}{|z_t|^{1/k}}\big|^2\|v'\|_\infty^2$. Now, since $\lim\limits_{\e\to 0}\frac{1}{|\log\e|} \log\big(1+\frac{|\log\e|}\e C\big)=1$, we obtain that
$$
\limsup_{\e\to 0^+} F_\e(u_\e(t,t+\eta))= \limsup_{\e\to 0^+} F_\e(u_\e(t,t+|z_t|^{1/k}\overline T\e))\le m_k |u(t^+)-u(t^-)|^{1/k}
$$
for all $t\in S(u)$.

We now write $S(u)\cup \{-\eta,1\}=\{t_0, t_1, \ldots, t_M\}$  with $t_{j-1}<t_j$. We obtain 
\begin{eqnarray*}
&&\hskip-1.5cm\lim_{\e\to 0}F_\e(u_\e,(t_{j-1}+\eta, t_j))
\\
&=&\lim_{\e\to 0} \Big(
\int_{t_{j-1}+\eta}^{t_j}\frac{1}{\e|\log\e|} \log(1+\e|\log\e||u'|^2)\dx + \e^{2k-1} \int_{t_{j-1}+\eta}^{t_j} |u^{(k)}|^2\dx\Big)
\\
&=& \lim_{\e\to 0} 
\int_{t_{j-1}+\eta}^{t_j}\frac{1}{\e|\log\e|} \log(1+\e|\log\e||u'|^2)\dx
\\
&=& 
\int_{t_{j-1}+\eta}^{t_j}|u'|^2\dx,
\end{eqnarray*}
where the last equality follows from the Dominated Convergence Theorem.
Gathering these equalities valid for all for $j$, and the inequalities valid for $t\in S(u)$, we obtain the claim. 
\end{proof}

Finally, the next proposition completes the proof of the upper bound, and then of Theorem \ref{main} in dimension one.

\begin{proposition}[upper bound]\label{ubb} 
Let $u\in SBV(0,1)$ with $u'\in L^2(0,1)$ and such that $\sum_{t\in S(u)} |u(t^+)-u(t^-)|^{1/k}<+\infty$.
Then there exists a sequence $\{u_\e\}$ that converges to $u$ in $L^1(0,1)$ such that \eqref{ub-eq} holds.
\end{proposition}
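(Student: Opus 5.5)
We argue by density combined with the lower semicontinuity of the $\Gamma$-limsup. Denote by $F''(u)\coloneq\Gamma\hbox{-}\limsup_{\e\to0}F_\e(u)$ in $L^1(0,1)$; it is lower semicontinuous in $L^1$. Proposition~\ref{ubck} gives
$$
F''(w)\le \int_0^1|w'|^2dx+m_k\sum_{t\in S(w)}|w(t^+)-w(t^-)|^{1/k}=:F(w)
$$
for every $w$ in the class $\mathcal D$ of piecewise-$C^k$ functions that are locally constant on both sides of each jump point. It therefore suffices to build, for $u$ as in the statement, a sequence $w_n\in\mathcal D$ with $w_n\to u$ in $L^1$ and $\limsup_n F(w_n)\le F(u)$. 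Then $F''(u)\le\liminf_n F''(w_n)\le F(u)$, and a diagonal argument produces the recovery sequence $\{u_\e\}$.

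\emph{Step 1 (finitely many jumps).} Since $\sum_{t\in S(u)}|u(t^+)-u(t^-)|^{1/k}<+\infty$ and $k\ge1$, the jump part $u^j=\sum_i z_i\chi_{(t_i,1)}$ has $\sum|z_i|<\infty$. Indeed, $|z_i|\le|z_i|^{1/k}$ when $|z_i|\le1$, and only finitely many jumps exceed $1$. Keep the first $M$ jumps (ordered by size) and write $u_M=u^a+\sum_{i\le M}z_i\chi_{(t_i,1)}$, where $u^a=u-u^j\in H^1$. Then $u_M\to u$ in $L^1$ (even in $BV$ norm), $u_M'=u'$, and the surface term of $u_M$ is bounded by that of $u$. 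Hence $F(u_M)\le F(u)$.

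\emph{Step 2 (regularization away from jumps).} Fix $M$ and the finitely many jump points $t_1<\dots<t_M$. On each interval $(t_{i-1},t_i)$ the function $u_M$ lies in $H^1$. Modify it as follows: on $(t_i-\eta,t_i)$ replace it by the constant $u_M(t_i^-)$, and similarly on the right side. Then rescale affinely the remaining part, i.e.\ compose with the affine map sending $(t_{i-1}+\eta,t_i-\eta)$ onto $(t_{i-1},t_i)$. This keeps continuity up to the constant pieces and costs a factor $(1+O(\eta))$ on the Dirichlet energy. Finally mollify inside each open interval so as to obtain a $C^k$ function that is still constant near the endpoints. This is possible by first mollifying and then gluing with a cut-off to the constants, which are the traces, at an $H^1$-small cost, since $H^1$ functions are continuous. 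The resulting $w\in\mathcal D$ has the same jumps $z_i$, and $\int|w'|^2\to\int|u'|^2$ as $\eta\to0$ and the mollification parameter tends to $0$.

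The main obstacle is the bookkeeping in Step 2: making the function exactly constant near each jump without altering the jump amplitudes and with vanishing error in the Dirichlet energy. The cleanest way is the affine reparametrization described above, which preserves the traces $u(t_i^\pm)$, and hence the jump sizes, exactly. The only remaining care is to apply the mollification in a way that preserves these boundary values, for instance by mollifying $u$ extended constantly outside each interval. Combining Steps 1 and 2 with a diagonal choice gives $w_n\in\mathcal D$ with $w_n\to u$ and $\limsup_n F(w_n)\le F(u)$, which concludes the proof.
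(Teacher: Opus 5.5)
Your proposal is correct and follows essentially the same route as the paper: keep only the finitely many jumps above a threshold, make the function constant on both sides of each jump, mollify, and conclude through Proposition~\ref{ubck} and a diagonal argument, which you make precise via the $L^1$-lower semicontinuity of the $\Gamma$-limsup. The only difference is cosmetic: you create the constant collars by an affine reparametrization of each piece, whereas the paper inserts them by enlarging the domain to $(0,1+4\eta\#S_n(u))$, and your version keeps the domain fixed and preserves the jump amplitudes exactly.
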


\begin{proof} It suffices to remark that the functions satisfying the assumptions of Proposition \ref{ubck} are dense in energy; that is, there exists a sequence $\{u^h\}$ of such functions such that
\begin{eqnarray*}
&&\limsup_{h\to+\infty}\Big( \int_0^1|(u^h)'|^2dx +m_k\sum_{t\in S(u^h)} |u^h(t^+)-u^h(t^-)|^{1/k}\Big)\\
&&\hskip3cm= \int_0^1|u'|^2dx +m_k\sum_{t\in S(u)} |u(t^+)-u(t^-)|^{1/k}.
\end{eqnarray*}
This can be proven, first by writing the function $u\coloneq  w+\sum_{t\in S(u)} (u(t^+)-u(t^-))\chi_{(t,1)}$, where $w\in H^1(0,1)$ and approximating $u$ with $u_n\coloneq w+\sum_{t\in S_n(u)} (u(t^+)-u(t^-))\chi_{(t,1)}$, where $S_n(u)\coloneq \{t\in S(u): |u(t^+)-u(t^-)|>\frac1n\}$ is now a finite set, then extending $u_n$ as a constant in the intervals $(t-2\eta, t)$ and $(t, t+2\eta)$, enlarging the interval of definition of $u_n$ to $(0,1+ 4\eta\#(S_n(u)))$, so
that these new $u_n^\eta$ converge to $u_n$ as $\eta\to0$, and finally mollifying $u^\eta_n$ with a sequence of mollifiers with compact support. 

Since for every such $u^h$ the sequence $\{u^h_\e\}$ constructed in Proposition \ref{ubck} satisfies \eqref{ub-eq}, we can conclude by a diagonal argument.
\end{proof}

\begin{remark}[scaling of the $\Gamma$-limit]\label{scal}\rm
With fixed $\kappa>0$ we can show that the $\Gamma$-limit of the scaled functionals
\begin{equation}\label{PMec}
F^\kappa_\e(u)\coloneq \int_0^1 \frac{\alpha}{\e|\log\e|} \log(1+c\kappa^2\e|\log\e||u'|^2)\dx+ \e^{2k-1} \int_0^1 |u^{(k)}|^2\dx
\end{equation}
is
\begin{equation}\label{PMc}
F^\kappa(u)=\alpha\kappa^2\int_0^1 |u'|^2\dx+\alpha^{1-\frac1{2k}}m_k\sum_{t\in S(u)}|u^+(t)-u^-(t)|^{1/k}.
\end{equation}

Changing variables $v\coloneq \kappa u$, we see that the $\Gamma$-limit of 
$$
G^\kappa_\e(v)=\alpha \Big(\int_0^1 \frac{1}{\e|\log\e|} \log(1+\e|\log\e||v'|^2)\dx+ \e^{2k-1}\frac1{\alpha\kappa^2} \int_0^1 |v^{(k)}|^2\dx\Big)
$$
is given by
\begin{equation}\label{PMcv}
G^\kappa(v)\coloneq \alpha \int_0^1 |v'|^2\dx+\alpha m^{\alpha, \kappa}_k\sum_{t\in S(u)}|v^+(t)-v^-(t)|^{1/k},
\end{equation}
where
\begin{eqnarray*}
&&m^{\alpha, \kappa}_k\coloneq \inf_{T>0}\min\bigg\{T+\frac1{\alpha\kappa^2} \int_0^T|v^{(k)}|^2\,dt: v\in H^k(0,T),  v(0)=0, v(T)=1, \\ &&\hskip4cm v^{(\ell)}(T)=v^{(\ell)}(0)=0 \hbox{ for all } \ell\in\{1,\ldots,k-1\}\Big\}.
\end{eqnarray*}
Indeed all our arguments in the determination of the jump energy density remain unchanged if a constant multiplies the $k$-th derivative. We now note that, by the homogeneity property of the jump-energy formula, we have 
\begin{eqnarray*}
m^{\alpha, \kappa}_k&=&\inf_{T>0}\min\bigg\{T+\int_0^T|w^{(k)}|^2\,dt: v\in H^k(0,T),  w(0)=0, w(T)=\frac1{\kappa\sqrt\alpha}, \\ &&\hskip3cm  w^{(\ell)}(T)=w^{(\ell)}(0)=0 \hbox{ for all } \ell\in\{1,\ldots,k-1\}\Big\}\\
&=&m_k\frac1{\kappa^{1/k}\alpha^{1/2k}}.\end{eqnarray*}
Changing back the variables in \eqref{PMcv}, we then obtain  \eqref{PMc}.
\end{remark}

\section{Surface scaling of the Perona--Malik energies}\label{sufi}
From the above result, we can obtain the $\Gamma$-limit of a different scaling of the Perona--Malik energies, which has been useful to explain some staircasing phenomena in Image Processing \cite{GP-2} when $k=2$. We only state the one-dimensional version, the general case being the analog of Theorem \ref{main}.

\begin{theorem} Let $\mathbb F_\e$ be defined in $H^k(0,1)$ by
\begin{equation}\label{GPMe}
\mathbb F_\e(u)\coloneq \int_0^1 \frac{1}{2\e|\log\e|} \log(1+|u'|^2)\dx+ \e^{2k-1} \int_0^1 |u^{(k)}|^2\dx.
\end{equation}
Then the $\Gamma$-limit of $\mathbb F_\e$ with respect to the $L^1$-convergence is
\begin{equation}\label{GPM}
\mathbb F(u)\coloneq m_k\sum_{t\in S(u)}|u^+(t)-u^-(t)|^{1/k}
\end{equation}
defined on $SBV(0,1)$-functions with $u'=0$ almost everywhere, and where $m_k$ is as in 
\eqref{emmeka-2}.
\end{theorem}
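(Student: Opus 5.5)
The plan is to reduce the statement to the machinery of Section \ref{se-pre}, which uses only a bound on the truncated energy $G_\e$ in \eqref{defGI}. The logarithmic part of $\mathbb F_\e$ should then produce exactly the weak lower bound \eqref{weaklb}. The factor $\frac12$ in \eqref{GPMe} should compensate for $\log(1+z^2)\approx 2\log|z|$ at slopes of order $1/\e$, so that a transition layer of length $T\e$ costs asymptotically $T$.

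\textbf{Step 1 (analogue of Proposition \ref{sub-prop}).} Take $p_\e$ and $c_\e=\e^{p_\e}$ as in Lemma \ref{MN-lemma}, and set $Z_\e\coloneq c_\e/\e$.
\begin{itemize}
\item Above the threshold, i.e.\ for $|z|\ge Z_\e$, we have $\frac{1}{2\e|\log\e|}\log(1+z^2)\ge \frac{(2-2p_\e)|\log\e|}{2\e|\log\e|}=\frac{1-p_\e}{\e}$.
\item Below the threshold, $s\mapsto \log(1+s)/s$ is decreasing. Hence for $|z|\le Z_\e$ we get $\log(1+z^2)\ge z^2\log(1+Z_\e^2)/Z_\e^2\ge (2-2p_\e)|\log\e|\,\e^{2-2p_\e}z^2$, which gives $\frac{1}{2\e|\log\e|}\log(1+z^2)\ge (1-p_\e)\e^{1-2p_\e}z^2$.
\end{itemize}
Thus $\mathbb F_\e(u)\ge(1-p_\e)G_\e(u;(0,1))$. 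All results of Section \ref{se-pre} (Lemmas \ref{lowerlemma}--\ref{lemmar}, the energy densities $\phi_{\e,N}$, $\psi_{\theta,N}$, $\phi_N$) use only this bound, so they apply verbatim.

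\textbf{Step 2 (compactness and lower bound).} First I rerun the proof of Theorem \ref{compactness} and of the lower bound proposition, with $G_\e$ as the only input for the jump part. This gives, for $u_\e\to u$, the jump estimate $\liminf \mathbb F_\e(u_\e,I)\ge m_k\sum_{t\in S(u)\cap I}|u^+-u^-|^{1/k}$ and $u\in SBV$ with no Cantor part.

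To force $u'=0$, fix $\kappa>0$. For $\e$ small, $\kappa^2\e|\log\e|\le1$, so monotonicity of the logarithm gives $\mathbb F_\e\ge F^\kappa_\e$ in \eqref{PMec} with $\alpha=\frac12$, $c=1$. By Remark \ref{scal}, $\Gamma$-$\liminf\mathbb F_\e(u,I)\ge \frac{\kappa^2}{2}\int_I|u'|^2$. Letting $\kappa\to\infty$ forces $u'=0$ a.e. on the domain.

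The two estimates are combined by localizing on open intervals and applying the sup-of-measures lemma as in Remark \ref{som-1}. This part is straightforward, since the bulk measure is then zero.

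\textbf{Step 3 (upper bound).} Take first $u$ piecewise constant with finitely many jumps. I use the construction \eqref{glisuco} with the optimal pair $(\overline T,\overline v)$. After the change of variables in Proposition \ref{ubck}, the contribution of a jump $z$ is
$$
|z|^{1/k}\Big(\int_0^{\overline T}\frac{1}{2|\log\e|}\log\Big(1+\frac{|z|^{2-2/k}|\overline v'|^2}{\e^2}\Big)dy+\int_0^{\overline T}|\overline v^{(k)}|^2dy\Big).
$$
The first integrand is bounded by $\frac{1}{2|\log\e|}\log(1+C\e^{-2})\to1$, so the limsup is at most $m_k|z|^{1/k}$. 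Outside the layers $u_\e$ is constant and contributes nothing.

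For general $u\in SBV$ with $u'=0$ and $\sum|u^+-u^-|^{1/k}<\infty$, I keep only the jumps with $|z|>1/n$. The removed part has total variation at most $\sum_{|z|\le 1/n}|z|\le n^{1/k-1}\sum|z|^{1/k}\to0$, so the truncations converge in $L^1$ with energies converging. A diagonal argument then concludes.

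The main obstacle is Step 1 together with the claim that Section \ref{se-pre} depends only on $G_\e$ and not on the specific form of $F_\e$. That claim needs to be checked, in particular for the bulk modification in step (i) of Theorem \ref{compactness}, where the set $D_\e$ should be redefined with the new threshold.
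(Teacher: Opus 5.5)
Your proposal is correct and follows essentially the paper's route. The truncated-quadratic bound $\mathbb F_\e\ge(1-p_\e)G_\e$, which you derive directly from the monotonicity of $s\mapsto\log(1+s)/s$ (a cleaner substitute for the paper's appeal to Proposition~\ref{sub-prop}), transfers the jump analysis of Section~\ref{se-pre}; the comparison $\mathbb F_\e\ge F^\kappa_\e$ with $\alpha=\frac12$ and $\kappa\to\infty$ forces $u'=0$; and the upper bound uses \eqref{glisuco} on piecewise-constant functions plus your (correct) truncation of small jumps. The obstacle you flag about redefining $D_\e$ is not a real one: $\e|\log\e|\le1$ gives $\mathbb F_\e\ge\frac12F_\e$, so the coerciveness and bulk estimates of Theorem~\ref{compactness} apply to $u_\e$ unchanged.
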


We note that by the monotonicity of $z\mapsto \log(1+z^2)$ we have $\mathbb F_\e\ge F_\e$ as soon as  $\e |\log\e|\le 1$, which implies that $\mathbb F_\e$ enjoy the coerciveness properties of $F_\e$.

\begin{proof} 
The argument just illustrated also shows that for all $\kappa>0$ we have $\mathbb F_\e\ge F^\kappa_\e$ for $\e$ small enough, where $F^\kappa_\e$ is the functional in \eqref{PMec}. Hence, by Remark \ref{scal} we have the lower bounds
$$
\Gamma\hbox{-}\liminf_{\e\to 0}\mathbb F_\e(u) \ge \frac12\kappa^2\int_0^1 |u'|^2\dx+2^{\frac{1}{2k}-1}m_k\sum_{t\in S(u)}|u^+(t)-u^-(t)|^{1/k}
$$
for all $\kappa>0$. By the arbitrariness of $\kappa$ and applying the sup-of-measures lemma, we obtain that the domain of the $\Gamma$-limit are $SBV$ functions $u$ with $u'=0$ and $\sum_{t\in S(u)}|u^+(t)-u^-(t)|^{1/k}
<+\infty$. 

In order to obtain a sharp inequality for the jump term, solving the inequality $\frac12\log(1+z^2)\ge \log(1+\e|\log\e|z^2)$, we note that by Proposition \ref{sub-prop} we have
$$
\frac{1}{2\e|\log\e|}\log (1+z^2)\ge (1-\eta)\max\Big\{\e^{1-2p_\e} z^2,\frac1\e\Big\},
$$
which is the condition that allows one to estimate the jump energy density by $m_k|u(t^+)-u(t^-)|^{1/k}$. Hence, we have obtained the desired lower bound.

As for the upper bound, by density it suffices to check it when $u$ is piecewise constant. For such a $u$ we use the same construction as in \eqref{glisuco}. The argument used in the proof of Proposition \ref{ubck} now becomes
\begin{eqnarray*}&&
\hskip-2cm F_\e(u_\e,(t,t+|z_t|^{1/k}\overline T\e))
\ =\ 
\int_0^{|z_t|^{1/k}\overline T\e} \frac{1}{2\e|\log\e|} \log\Big(1+\Big|\frac{z_t}{\e|z_t|^{1/k}}\Big|^2|v'\Big( \frac{x}{\e|z_t|^{1/k}}\Big)|^2\Big)\dx\\
&&\hskip 3cm + \e^{2k-1} \int_0^{|z_t|^{1/k}\overline T\e}\frac{1}{\e^{2k}} \Big|\overline v^{(k)}\Big( \frac{x-t}{\e|z_t|^{1/k}}\Big)\Big|^2\dx\\
&=& 
|z_t|^{1/k}\Big(\int_0^{\overline T} \frac{1}{2|\log\e|} \log\Big(1+\frac{1}{\e^2}\Big|\frac{z_t}{|z_t|^{1/k}}\Big|^2|v'(y)|^2\Big)dy+ \int_0^{\overline T} |\overline v^{(k)}(y)|^2 dy\Big)\\
&\le& 
|z_t|^{1/k}\Big(\overline T \frac{1}{2|\log\e|} \log\Big(1+\frac{1}{\e^2} C\Big)+ \int_0^{\overline T} |\overline v^{(k)}(y)|^2 dy\Big),
\end{eqnarray*}
which gives the desired result as $\e\to 0$. 
\end{proof}

\noindent \textsc{Acknowledgements.}
The authors gratefully acknowledge the hospitality of the Center of Nonlinear Analysis at Carnegie Mellon University, Pittsburgh. Andrea Braides is member of GNAMPA of INdAM, and is partially supported by the MIUR Excellence Department Project 2023-2027 MatMod@TOV awarded to the Department of Mathematics, University of Rome Tor Vergata.
 The research of Irene Fonseca was partially supported by the National Science Foundation (NSF) under grants DMS--2108784, DMS--2205627 and DMS--2342349.

\goodbreak
 \bibliographystyle{abbrv}
\bibliography{references}

\end{document}